\newcommand{\rrVert}{\Vert}
\newcommand{\rrvert}{\vert}
\newcommand{\llVert}{\Vert}
\newcommand{\llvert}{\vert}
\newtheorem{lemma}{Lemma}[section]
\newtheorem{proposition}{Proposition}[section]
\newtheorem{theorem}{Theorem}[section]
\newtheorem{corollary}{Corollary}[section]
\begin{document}
\begin{frontmatter}

\title{Pointwise eigenfunction estimates and intrinsic
ultracontractivity-type properties of Feynman--Kac semigroups for a
class of L\'evy processes}
\runtitle{Eigenfunction estimates and intrinsic ultracontractivity\hspace*{10pt}}

\begin{aug}
\author[A]{\fnms{Kamil} \snm{Kaleta}\corref{}\ead[label=e1]{kamil.kaleta@pwr.wroc.pl}\thanksref{T1}}
\and
\author[B]{\fnms{J\'ozsef} \snm{L{\H o}rinczi}\ead[label=e2]{J.Lorinczi@lboro.ac.uk}}
\runauthor{K. Kaleta and J. L{\H o}rinczi}
\affiliation{Wroc{\l}aw University of Technology and Loughborough University}
\address[A]{Institute of Mathematics\\
\quad and Computer Science\\
Wroc{\l}aw University of Technology\\
Wyb. Wyspia{\'n}skiego 27\\
50-370 Wroc{\l}aw\\
Poland\\
\printead{e1}} 
\address[B]{Department of Mathematical Sciences\\
Loughborough University\\
Loughborough LE11 3TU\\
United Kingdom\\
\printead{e2}}
\end{aug}
\thankstext{T1}{Supported by the National Science Center (Poland) Grant
on the basis of the decision no. DEC-2011/03/D/ST1/00311.}

\received{\smonth{2} \syear{2013}}
\revised{\smonth{9} \syear{2013}}

%
\begin{abstract}
We introduce a class of L\'evy processes subject to specific regularity
conditions, and consider their
Feynman--Kac semigroups given under a Kato-class potential. Using new
techniques, first we analyze the
rate of decay of eigenfunctions at infinity. We prove bounds on
$\lambda
$-subaveraging functions, from
which we derive two-sided sharp pointwise estimates on the ground
state, and obtain upper bounds on all
other eigenfunctions. Next, by using these results, we analyze
intrinsic ultracontractivity and related
properties of the semigroup refining them by the concept of ground
state domination and asymptotic versions.
We establish the relationships of these properties, derive sharp
necessary and sufficient conditions for
their validity in terms of the behavior of the L\'evy density and the
potential at infinity, define the
concept of borderline potential for the asymptotic properties and give
probabilistic and variational
characterizations. These results are amply illustrated by key examples.
\end{abstract}

%
\begin{keyword}[class=AMS]
\kwd[Primary ]{47D08}
\kwd{60G51}
\kwd[; secondary ]{47D03}
\kwd{47G20}
\end{keyword}
\begin{keyword}
\kwd{Symmetric L\'evy process}
\kwd{subordinate Brownian motion}
\kwd{Feynman--Kac semigroup}
\kwd{nonlocal operator}
\kwd{intrinsic ultracontractivity}
\kwd{entropy}
\kwd{ground state domination}
\kwd{decay of eigenfunctions}
\kwd{$\lambda$-subaveraging function}
\end{keyword}

\end{frontmatter}

\section{Introduction}\label{sec1}
Jump L\'evy processes differ in a number of essential ways from
Brownian motion. In this paper,
we focus on two aspects of this qualitatively different behavior under
the effect of a potential
(or penalty function) on the paths. One is a strong smoothing property
of the semigroup of such a
process called intrinsic ultracontractivity. The other is the rate of
decay of its eigenfunctions.
These two properties are related and in this paper we will discuss the
extent of this relationship.

We consider a class of symmetric jump L\'evy processes satisfying
specific regularity conditions.
One condition is given in terms of the convolution of their jump
intensities by a restriction to
a subset of the full state space in relation to large jumps. Another is
existence of transition
densities and their uniform boundedness in space after at least a
sufficiently long time. A final
condition requires sufficient regularity of the Green function for
specially chosen balls. These
conditions are formulated in Assumptions~\ref{assassnu}--\ref{assassbhi} below. As it will be seen,
they are satisfied by important classes of L\'evy processes, including
many cases of interest
of subordinate Brownian motion and also others.

Next, we introduce a potential function $V$ and study the L\'evy
processes perturbed by it in terms of
their Feynman--Kac semigroups $\{T_t\dvtx  t \geq0\}$.
Under a~suitable choice of $V$, which we call $X$-Kato class associated
with L\'evy process $(X_t)_{t \geq0}$
(see Definition~\ref{Xkato} below), the semigroup $\{T_t\dvtx  t \geq0\}$
is well
defined and consists of symmetric
operators. We additionally assume that $V(x) \to\infty$ as $|x| \to
\infty$, which implies that all $T_t$
are compact and have a discrete spectrum. The corresponding
eigenfunctions $\varphi_n$ form an orthonormal
basis in $L^2(\mathbf{R}^d,dx)$ and satisfy $T_t \varphi_n =
e^{-\lambda_n t}
\varphi_n$, where $\lambda_0 <
\lambda_1 \le\lambda_2 \le\cdots\to\infty$. All $\varphi_n$ are
bounded continuous functions, and
each $\lambda_n$ has finite multiplicity. We call $\varphi_0$ ground
state, which can be shown to be
unique and strictly positive.

Since a ground state $\varphi_0$ exists, we can define the intrinsic
Feynman--Kac semigroup
%
\begin{equation}
\label{IFK} \widetilde{T}_t f(x) = \frac{e^{\lambda_0 t}}{\varphi_0(x)}
T_t(\varphi _0 f) (x),
\end{equation}
which\vspace*{1pt} is a Markov semigroup on $L^2(\mathbf{R}^d, \varphi_0^2\,dx)$. Whenever
$T_t$ has an integral kernel $u(t,x,y)$,
the operators $\widetilde{T}_t$ are given by the kernels
%
\begin{equation}
\tilde{u}(t,x,y) = \frac{e^{\lambda_0 t} u(t,x,y)} {\varphi
_0(x)\varphi_0(y)}. \label{kerIFK}
\end{equation}
The intrinsic Feynman--Kac semigroup $\{\widetilde T_t\dvtx  t \geq0\}$
describes a
stationary Markov process which is
called $P(\phi)_1$-process associated with potential $V$ \cite{bibS,bibBL,bibLHB,bibKL}.

Intrinsic ultracontractivity (IUC) means that $\widetilde{T}_t$ is a
bounded operator from $L^2(\mathbf{R}^d,\varphi^2_0\,dx)$
to $L^\infty(\mathbf{R}^d)$ for every $t>0$, or equivalently, $\tilde u(t,x,y)
\le C$ for all $x,y \in\mathbf{R}^d$, with an
appropriate constant $C$ dependent on $V$ and $t$. IUC has been
introduced in \cite{bibDS} for
general semigroups of compact operators and it proved to be a strong
regularity property \cite{bibD}. Important
examples include semigroups of elliptic operators and Schr{\"o}dinger
semigroups on function spaces over $\mathbf{R}^d$ or
bounded domains of $\mathbf{R}^d$ with Dirichlet boundary conditions
\mbox{\cite{bibB,bibD,bibBD,bibKimS1,bibAT,bibMH,bibTom}}.
More recently, IUC has been investigated also in the case of semigroups
generated by fractional Laplacians, and fractional
and relativistic Schr{\"o}dinger operators \mbox{\cite{bibCS1,bibCS2,bibKu2,bibKS,bibKw,bibKaKu,bibKL}}, as well
as for more general symmetric \cite{bibGrz} and nonsymmetric \cite{bibKimS2} L\'evy processes in bounded domains.
In the context of parabolic partial differential equations \cite{bibMur02,bibMur07}, obtained integral representations
of the nonnegative solutions of the Cauchy problem when intrinsic
ultracontractivity holds.

As it follows from our previous analysis, it is of interest to consider
also the property that $\tilde u(t,x,y) \le
C$, for all $x,y \in\mathbf{R}^d$ and sufficiently large $t$ only,
which we
call asymptotic intrinsic ultracontractivity
(AIUC). This is weaker than IUC, and we have seen in the case of
fractional $P(\phi)_1$-processes \cite{bibKL} that
it has an immediate impact on the support properties of their (Gibbs)
path measure. Another important consequence is
that AIUC is equivalent to
\[
\biggl\llvert \frac{e^{\lambda_0 t}u(t,x,y)}{\varphi_0(x)\varphi
_0(y)}-1\biggr\rrvert \leq C e^{-(\lambda_1-\lambda_0) t}, \qquad
t>t_0,
\]
which means that the distribution of the corresponding $P(\phi)_1$-process rapidly tends to equilibrium as $t \to
\infty$ with decay rate given by the spectral gap $\lambda_1-\lambda
_0$. This, in turn, has an offshoot on the
efficiency of practical sampling of conditioned processes; see, for example,
\cite{bibH05,bibH07}. Also, it implies that
the kernel $u(t,x,y)$ takes the shape of the ground state exponentially
quickly, in particular, the decay of the
eigenfunctions $\varphi_1, \varphi_2, \ldots$ will be dominated by the
decay of the ground state.

A basic question we address in this paper is that given the class of
jump L\'evy processes considered, what are
conditions on $V$ making the Feynman--Kac semigroup $\{T_t\dvtx  t \geq0\}$
IUC or
AIUC. The answer is, roughly, that this is
decided by how the asymptotic behaviors of $V$ and $|\log\nu|$ at
infinity compare, where $\nu$ is the L\'evy
density. We further refine IUC-type properties by considering a ground
state domination (GSD) property and its
asymptotic version for sufficiently long times (AGSD). We clarify the
relationships of these properties (Theorem
\ref{thmiucvelgsd}), and give precise necessary and sufficient
conditions (Theorems~\ref{thmsuffiuc} and
\ref{thmneciuc}) for them to hold. Our results recover the facts on
IUC previously known for stable processes
\cite{bibKaKu,bibKL} and relativistic stable processes \cite{bibKS}
only, and establish these properties for
many other processes, also shedding new light on existing results for
diffusions \cite{bibDS,bibD}. Corollary
\ref{corborder} gives a sharp description of the
borderline potential $V(x) \asymp|\log\nu(x)|$ distinguishing (A)IUC
from non(A)IUC. In comparison with the classic
result which says roughly that IUC holds for a diffusion when $V$ grows
in leading order super-quadratically, it is
seen that for a jump L\'evy process it is ``easier'' to be (A)IUC than
for a diffusion. We give an explanation of this
in terms of a balance mechanism between the competition of killing and
survival of paths (Proposition~\ref{propprobchar}),
and give a probabilistic characterization (Propositions~\ref{propAIUCproba}~and~\ref{propIUCproba}). Furthermore (Theorem
\ref{thmcharacter}, Corollary~\ref{variational}), we obtain a second
characterization of AIUC in terms of minimizing
a free energy functional appearing as the difference of an energy and
an entropy associated with the L\'evy measure
[see (\ref{energ})--(\ref{free}) below], and obtain the borderline
potential as the solution of this variational problem.
Due to the role played by the entropy this also explains why $\log\nu$
appears in this expression.

A second basic problem we address is to derive pointwise bounds on the
eigenfunctions for a given L\'evy process
and $V$. We obtain sharp lower and upper bounds (Theorem~\ref{thmgsest}) showing that the fall-off of the ground
state follows the tail behavior of the L\'evy density with corrections
resulting from the contribution of the
potential. Furthermore, we obtain upper bounds on all other
eigenfunctions (Theorem~\ref{thmbsest}, Corollary
\ref{surp1}). Under a reasonable condition, we derive a more explicit
expression of the dependence of the decay
on $V$ (Corollary~\ref{comparable}). We note that, importantly, the
ground state estimates follow without any
need to use results on the (A)IUC properties, unlike in the previous
work \cite{bibKS}. Our considerations
lead naturally to studying $\lambda$-subaveraging functions, which can
be thought of as counterparts to
$\lambda$-superaveraging functions known in potential theory, and we
prove two results on them (Theorems
\ref{thmdefic1} and~\ref{thmdefic2}).
Although it makes the paper longer, we find it useful to discuss
important (classes of) examples in relation
to both the ground state bounds and the IUC-type properties. We also
discuss in some detail which types of
specific cases are covered by the L\'evy processes we tackle in this
paper, as well as interesting cases which
fail the assumptions or the IUC-properties.

We note that our results can also be considered from the perspective of
the correspondence between jump L\'evy
processes and nonlocal operators, which are their generators. Via a
Feynman--Kac representation our results
characterize the decay of eigenfunctions and IUC-type properties of
semigroups related to generalized Schr\"odinger operators whose kinetic
terms are given by the generators. A
specific class of nonlocal operators covered to a large extent by our
results are of the form $\Psi(-\Delta)
+ V$, studied in \cite{bibHIL,bibHL}, where $\Psi$ is a Bernstein
function with vanishing right limit at
zero, giving the L\'evy exponent of a\vspace*{1pt} subordinator \cite{bibBF,bibSSV}. Some important specific cases are
fractional Schr\"odinger operators $(-\Delta)^{\alpha/2} + V$,
relativistic Schr\"odinger operators $(-\Delta+
m^{2/\alpha})^{\alpha/2} - m + V$, jump-diffusion operators
$a(-\Delta
)^{\alpha/2} - b\Delta+ V$, and many others.
There is an increasing literature studying these operators from both a
probabilistic and an analytic point of view
\cite{bibCMS,bibBB1,bibBB2,bibSong,bibCS,bibKS,bibFLS,bibLS,bibKaKu,bibKL,bibLMa,bibKal,bibCKSo,bibBYY,bibKu3}.

The basic input object in this paper is a L\'evy process, therefore, we
mainly use probabilistic methods. Our argument
builds on a completely new approach which combines sharp uniform
estimates on the local extrema of functions harmonic with
respect to the subprocess obtained under the Feynman--Kac functional of
the L\'evy process (Lemma~\ref{lmbhiold})
developed only recently in \cite{bibBKK} (see also \cite{bibKSV1}),
and new powerful self-improving estimates
under the path measure of the process (see the proofs of Theorems~\ref{thmdefic1} and~\ref{thmcruiuc}).
In the proofs, it will become evident that the pivotal Assumption~\ref{assassnu}(3) is very natural, and its
generality will be seen by many examples of interest satisfying it. In
particular, this will allow to study
also processes with exponentially localized L\'evy measures and derive
sharp estimates, which are of special
interest for various further investigations and have been little
understood before (see \cite{bibCMS}).
Since IUC has been much studied in operator semigroup and PDE context,
we find it important to develop
a probabilistic understanding of it.

The remainder of the paper is organized as follows. In Section~\ref{sec2}, we
state the main results. First, we introduce
the class of L\'evy processes and potentials which will be considered.
The next two subsections are devoted to
presenting the estimates on $\lambda$-subaveraging functions, ground
states and the other eigenfunctions. The
last two subsections present the results on intrinsic
ultracontractivity and ground state domination. In Section~\ref{sec3}, we
provide the proofs in a similar division of the material. We
devote Section~\ref{sec4} to a detailed discussion of
ground state decay and IUC-type behaviors of specific processes of interest.

\section{Assumptions and main results}\label{sec2}

\subsection{A class of L\'evy processes}\label{sec2.1}

Let $(X_t)_{t \geq0}$ be a L\'evy process in $\mathbf{R}^d$, $d \geq
1$. We use the
notations $\mathbf{P}^x$ and $\mathbf{E}^x$,
respectively, for the probability measure and expected value of the
process starting in $x \in\mathbf{R}^d$.
Recall that $(X_t)_{t \geq0}$ is a Markov process with respect to its own
filtration satisfying the strong Markov
property, and has right continuous paths with left limits (c\`adl\`ag
paths). It is a basic fact that
$(X_t)_{t \geq0}$ is completely determined by its characteristic
exponent $\psi$
given by the L\'evy--Khintchine
formula, that is, for $\xi\in\mathbf{R}^d$
\[
\mathbf{E}^0 \bigl[e^{i \xi\cdot X_t} \bigr] = e^{-t \psi(\xi)}
\]
holds with
\[
\psi(\xi) = - i \gamma\cdot\xi+ A \xi\cdot\xi+ \int_{\mathbf
{R}^d}
\bigl(1-e^{i \xi\cdot z} + i \xi\cdot z \mathbf{1}_{ \{|z|<1 \}}(z) \bigr)
\nu(dz).
\]
Here, $\gamma\in\mathbf{R}^d$ (\emph{drift coefficient}), $A$ is a symmetric
nonnegative definite
$d \times d$ matrix (\emph{diffusion} or \emph{Gaussian coefficient}),
and $\nu$ is a Radon
measure on $\mathbf{R}^d \setminus \{0 \}$ such that
$\int_{\mathbf{R}^d} (1
\wedge|z|^2) \nu(dz)
< \infty$ (\emph{L\'evy measure}). The defining parameters $(\gamma, A,
\nu)$ are called the
\emph{L\'evy triplet} of the process. $(X_t)_{t \geq0}$ is said to be
\emph
{symmetric} whenever $X_t$
has the same distribution as $-X_t$ for all $t>0$. In this case, $\psi
(\xi) = \psi(-\xi)$, $\xi
\in\mathbf{R}^d$, that is, $\gamma\equiv0$ and $\nu(B)= \nu(-B)$
for all $B
\in\mathcal{B}(\mathbf{R}^d \setminus \{0 \})$, and
then the characteristic exponent reduces to
\[
\psi(\xi) = A \xi\cdot\xi+ \int_{\mathbf{R}^d} \bigl(1-\cos(\xi \cdot
z)\bigr) \nu(dz).
\]
Whenever $\nu(dz)$ is absolutely continuous with respect to Lebesgue
measure, we denote its density
by $\nu(z)$ and call it the \emph{L\'evy} (\emph{jump}) \emph{intensity} of
$(X_t)_{t \geq0}$.
When $A \equiv0$ and $\nu
\neq0$, the process $(X_t)_{t \geq0}$ is said to be a \emph{purely
jump} L\'evy
process. For more details
on L\'evy processes, we refer to \cite{bibBer,bibApp,bibSat,bibFri,bibJ}.

We will use throughout the notation $C(a,b,c,\ldots)$ for a positive
constant dependent on
parameters $a,b,c,\ldots,$ while dependence on the process $(X_t)_{t
\geq0}$ is
indicated by $C(X)$, and
dependence on the dimension $d$ is assumed without being indicated.
Since constants appearing
in definitions, lemmas and theorems play a role later on, we use the
numbering $C_1, C_2, \ldots$
to be able to track them. We will also use the notation $f \asymp C g$
meaning that $C^{-1} g
\leq f \leq Cg$ with a constant $C$, while $f \asymp g$ means that
there is a constant $C$ such
that the latter holds.

For the remainder of the paper, we make three standing assumptions on
the L\'evy processes we
consider.

\begin{assumption}
\label{assassnu}
$(X_t)_{t \geq0}$ is a symmetric L\'evy process with L\'evy measure absolutely
continuous with respect to Lebesgue
measure. The corresponding jump intensity $\nu(x)$ is strictly positive
and satisfies the following three
conditions:
\begin{longlist}[(2)]
\item[(1)] For every $0<r \leq1/2$ there is a constant $C_1=C_1(X,r)
\geq1$ such that
\[
\nu(x) \asymp C_1 \nu(y), \qquad r \leq|y| \leq|x| \leq|y|+ 1.
\]
\item[(2)] There is a constant $C_2=C_2(X) \geq1$ such that
\[
\nu(x) \leq C_2 \nu(y), \qquad1/2 \leq|y| \leq|x|.
\]
\item[(3)] There is a constant $C_3=C_3(X) \geq1$ such that
\[
\int_{|z-x|>1/2,  |z-y|>1/2} \nu(x-z) \nu(z-y) \,dz \leq C_3 \nu
(x-y), \qquad|y-x|\geq1.
\]
\end{longlist}\vspace*{6pt}
Conditions (1) and (2) are clearly satisfied when, for example, $\nu(x)
\asymp\kappa(|x|)$, $x \in\mathbf{R}^d$, where
$\kappa\dvtx (0,\infty) \to(0,\infty)$ is a nonincreasing function such
that $\kappa(s) \leq C \kappa(s+1)$,
$s \geq1/2$. Condition (3) provides a regularity of the convolutions
of $\nu$ with respect to large jumps.
While (1)--(2) can be seen as technical conditions, (3)~has a
structural importance. Examples and counterexamples to
conditions \mbox{(1)--(3)} above are discussed in Section~\ref{sec4.1}.

Denote by $P_t f(x) = \mathbf{E}^x [f(X_t)]$ the transition operators
of the
process $(X_t)_{t \geq0}$. Recall that
$(X_t)_{t \geq0}$ has the strong Feller property if $P_t(L^{\infty
}(\mathbf{R}^d))
\subset C_{\mathrm{b}}(\mathbf{R}^d)$, for all $t>0$.
\end{assumption}

\begin{assumption}
\label{assassdensity}
The process $(X_t)_{t \geq0}$ has the strong Feller property, or equivalently,
the one-dimensional $\mathbf{P}^x$-distributions
of $(X_t)_{t \geq0}$ are absolutely continuous with respect to
Lebesgue measure,
that is, there exist transition probability
densities $p(t,x,y)=p(t,y-x,0) =: p(t,y-x)$. Furthermore, there exist
$t_{b}>0$ and $C_4=C_4(X,t_{b})$ such that
$0<p(t_{b},x) \leq C_4$, for all $x \in\mathbf{R}^d$.
\end{assumption}

Note that the first part of the above assumption is satisfied by a
large class of L\'evy processes including
subordinate Brownian motion \cite{bibKSV} provided that they are not
compound Poisson processes. In fact,
our assumption is equivalent to $e^{-t_{b} \psi(\cdot)} \in
L^1(\mathbf{R}^d)$,
for some $t_{b}>0$. In this case,
$p(t_{b},x)$ can be obtained by the Fourier inversion formula. Clearly,
this property extends to all $t \geq
t_{b}$ by the Markov property of $(X_t)_{t \geq0}$. For more details
on the
existence and properties of transition
probability densities for L\'evy processes, we refer to \cite{bibKSch}
and references therein.

We note for later use that under Assumption~\ref{assassdensity}
transition densities $p_D(t,x,y)$ of the
process $(X_t)_{t \geq0}$ killed upon exiting an open bounded set $D
\subset\mathbf{R}
^d$ also exist. In this case, the Hunt
formula
%
\begin{eqnarray}\label{eqHuntF}
p_D(t,x,y)=p(t,y-x) - \mathbf{E}^x \bigl[
\tau_D < t; p(t-\tau_D, y-X_{\tau
_D}) \bigr],
\nonumber\\[-10pt]\\[-10pt]
\eqntext{x, y \in D,}
\end{eqnarray}
holds, where $\tau_D = \inf \{t \geq0\dvtx  X_t \notin D \}$ is
the first exit time from $D$. The Green
function of the process $(X_t)_{t \geq0}$ on $D$ is thus given by
$G_D(x,y)= \int_0^{\infty} p_D(t,x,y) \,dy$, for all
$x, y \in D$, and $G_D(x,y)=0$ if $x \notin D$ or $y \notin D$.

Since our results rely on a use of potential theory, we need some more
regularity of $(X_t)_{t \geq0}$.

\begin{assumption}
\label{assassbhi}
For all $0<p<q<R \leq1$, we have
\[
\sup_{x \in B(0,p)} \sup_{y \in
B(0,q)^c} G_{B(0,R)}(x,y) < \infty.
\]
\end{assumption}

In many cases, Assumption~\ref{assassbhi} is a direct consequence of
time--space estimates of the function
$p(t,x)$. Indeed, it is clearly satisfied when the boundedness
condition holds with $G_{B(0,R)}(x,y)$
replaced by the potential kernel $G_{\mathbf{R}^d}(x, y)= \int_0^{\infty
}p(t,x,y)\,dt$ or, as proved in
\cite{bibBKK}, Proposition 2.3, the $\lambda$-potential\vspace*{1pt} kernel
$G^{\lambda}_{\mathbf{R}^d}(x, y)=
\int_0^{\infty}e^{-\lambda t} p(t,x,y)\,dt$, $\lambda>0$, whenever the
process $(X_t)_{t \geq0}$ is recurrent.

One of our key arguments following below uses some estimates (see Lem\-ma~\ref{lmbhiold}) on the local
extrema of functions harmonic with respect to the subprocess of
$(X_t)_{t \geq0}$ obtained under its Feynman--Kac
functional. These bounds are a direct consequence of more general
results of Bogdan, Kumagai and
Kwa\'snicki obtained recently in \cite{bibBKK}. To borrow these
results, we need to match some
assumptions made in this paper, however, since we consider symmetric L\'
evy processes, Assumptions~\ref{assassnu}(1),~\ref{assassdensity} [without requiring
boundedness of $p(t,x)$] and~\ref{assassbhi}
provide sufficient regularity of $(X_t)_{t \geq0}$ to allow a use of
\cite{bibBKK}. The remaining conditions in
Assumption~\ref{assassnu} are independent from this context and
together with condition (1) for
$r=1/2$ only they will allow to draw more regularity of the L\'evy
intensity $\nu$ needed in controlling
jumps in Section~\ref{subsecjumpest} below. Similarly, boundedness
of $p(t,x)$ in Assumption
\ref{assassdensity} guarantees sufficient regularity of the process
needed below.

Note that all of our assumptions above are satisfied by a wide class of
symmetric L\'evy processes including a large subclass of subordinate
Brownian motions, L\'evy processes with nondegenerate Brownian
components, symmetric stable-like ones or processes with
subexponentially localized L\'evy measures. Some important examples
with a verification of assumptions are discussed in Section~\ref{subsecass}.

Next we give the class of potentials which will be used in this paper.

\begin{definition}[($X$-Kato class)]\label{Xkato}
We say that the Borel function $V\dvtx\break  \mathbf{R}^d \to\mathbf{R}$
belongs to the
Kato-class $\mathcal{K}^X$ associated with the
L\'evy process $(X_t)_{t \geq0}$ if it satisfies
%
\begin{equation}
\label{eqKatoclass} \lim_{t \rightarrow0} \sup_{x \in\mathbf{R}^d}
\mathbf{E}^x \biggl[\int_0^t
\bigl|V(X_s)\bigr| \,ds \biggr] = 0.
\end{equation}
We write $V \in\mathcal{K}_{\operatorname{loc}}^X$ if $V \mathbf
{1}_B \in\mathcal{K}^X$ for every ball $B
\subset\mathbf{R}^d$. Moreover, we say
that $V$ is an \emph{$X$-Kato decomposable potential}, whenever
\[
V = V_{+} - V_{-}\qquad\mbox{with } V_{-} \in
\mathcal{K}^X, V_{+} \in\mathcal{K}_{\operatorname{loc}}^X,
\]
where $V_{+}$ and $V_{-}$ denote the positive and negative parts of
$V$, respectively.
\end{definition}

For simplicity, in what follows we refer to $X$-Kato decomposable
potentials as \emph{$X$-Kato class
potentials}. It is easy to see that $L^{\infty}_{\operatorname{loc}}
\subset\mathcal{K}_{\operatorname{loc}
}^X$. Moreover, by stochastic
continuity of $(X_t)_{t \geq0}$ also $\mathcal{K}_{\operatorname
{loc}}^X \subset L^1_{\operatorname{loc}}(\mathbf{R}^d)$,
and thus an $X$-Kato class potential
is always locally absolutely integrable. Note that condition (\ref
{eqKatoclass}) allows local singularities
of $V$. For specific processes $(X_t)_{t \geq0}$ condition (\ref
{eqKatoclass})
can be equivalently reformulated in
terms of the potential kernel of the process in the transient case, and
the so-called compensated potential
kernel when the process is recurrent (for more details see, e.g., \cite{bibZ,bibCMS,bibBB2}).

We single out a restricted set of potentials which will be often used below.

\begin{assumption}
\label{assasspinning}
For a given L\'evy process $(X_t)_{t \geq0}$ let $V$ be such that:
\begin{longlist}[(2)]
\item[(1)] $V$ is an $X$-Kato class potential

\item[(2)] $V(x) \to\infty$ as $|x| \to\infty$.
\end{longlist}
\end{assumption}


Next, for a given $X$-Kato class potential $V$, we define
\[
T_t f(x) = \mathbf{E}^x \bigl[e^{-\int_0^t V(X_s) \,ds}
f(X_t) \bigr], \qquad f \in L^2\bigl(
\mathbf{R}^d\bigr), t>0.
\]
Using the Markov property and stochastic continuity of the process, it
can be shown that
$\{T_t\dvtx  t\geq0\}$ is a strongly continuous semigroup of symmetric
operators on $L^2(\mathbf{R}^d)$, which
we call the \emph{Feynman--Kac semigroup} associated with the process
$(X_t)_{t \geq0}$ and potential $V$. In
particular, by the Hille--Yoshida theorem, there exists a self-adjoint
operator $H$ bounded from below
such that $e^{-tH} = T_t$. The operator $H$ is often seen as a \emph
{generalized Schr\"odinger operator}
based on the infinitesimal generator $L$ of the process $(X_t)_{t \geq0}$.
Whenever $V$ is relatively bounded
with respect to $L$ with relative bound less than 1 we can write $H =
-L+V$ as an operator sum.

We now summarize the basic properties of the operators $T_t$, some of
which will be explicitly used below.

\begin{lemma}
\label{lmsemprop}
Let Assumptions~\ref{assassnu}--\ref{assasspinning} be satisfied.
Then the following properties hold:
\begin{longlist}[(2)]
\item[(1)]
For all $t>0$, $T_t$ are bounded operators on each $L^p(\mathbf
{R}^d)$, $1 \leq
p \leq\infty$. The operators $T_t\dvtx
L^p(\mathbf{R}^d) \to L^p(\mathbf{R}^d)$ for $1 \leq p \leq\infty$,
$t > 0$, $T_t\dvtx
L^p(\mathbf{R}^d) \to L^{\infty}(\mathbf{R}^d)$
for $1 < p \leq\infty$, $t \geq t_{b}$, and $T_t\dvtx  L^1(\mathbf{R}^d)
\to
L^{\infty}(\mathbf{R}^d)$ for $t \geq2t_{b}$ are
bounded.

\item[(2)]
For all $t \geq2t_{b}$, $T_t$ has a bounded, measurable, and symmetric
kernel $u(t, \cdot, \cdot)$, that is,
$T_t f(x) = \int_{\mathbf{R}^d} u(t,x,y)f(y) \,dy$, $f \in L^p(\mathbf
{R}^d)$, $1 \leq p
\leq\infty$.

\item[(3)]
For all $t>0$ and $f \in L^{\infty}(\mathbf{R}^d)$, $T_t f(x)$ is a bounded
continuous function.

\item[(4)]
For all $t \geq2t_{b}$ the operators $T_t$ are positivity improving,
that is,\break $T_t f(x) > 0$ for all
$x \in\mathbf{R}^d$ and $f \in L^2(\mathbf{R}^d)$ such that $f \geq
0$ and $f \neq0$ a.e.

\item[(5)]
All operators $T_t\dvtx L^2(\mathbf{R}^d) \to L^2(\mathbf{R}^d)$, $t>0$,
are compact.
\end{longlist}
\end{lemma}

Properties (1)--(4) can be established by standard arguments based on
\cite{bibCZ}, Section~3.2. Property
(5) is a consequence of $V(x) \to\infty$ as $|x| \to\infty$ and
standard arguments based on approximation
of $T_{t}$, $t \geq2t_{b}$, by compact operators,\vspace*{1pt} see \cite{bibKL}, Lemma~3.2. Clearly, compactness extends to
all $t>0$ by the fact that $T_t = e^{-tH}$ for a self-adjoint operator
$H$ and a use of the spectral theorem.
Note that we do not assume that $p(t,x)$ is bounded for all $t>0$, and
thus in general the operators
$T_t\dvtx L^p(\mathbf{R}^d) \to L^{\infty}(\mathbf{R}^d)$ need not be
bounded for $t<t_{b}$.

The theory of operator semigroups implies that there exists an
orthonormal basis in $L^2(\mathbf{R}^d)$ consisting
of the eigenfunctions $\varphi_n$ given by $T_t \varphi_n =
e^{-\lambda
_n t} \varphi_n$, $t>0$, $n \geq0$,
and $\lambda_0 < \lambda_1 \leq\lambda_2 \leq\cdots\to\infty$. All
$\varphi_n$ are bounded continuous
functions. Moreover, the first eigenfunction (or ground state) $\varphi
_0$ has a strictly positive version (\cite{bibRS}, Theorem XIII.43), which will be our choice throughout.

\subsection{Estimates of \texorpdfstring{$\lambda$}{$lambda$}-subaveraging functions}\label{sec2.2}

Recall that one of the fundamental objects in potential theory are
\emph{$\lambda$-superaveraging} (and
related \emph{$\lambda$-excessive}) functions, see  \cite{bibCW}, Section~2.1. Below it is useful to consider
\emph{$\lambda$-subaveraging} functions, which in some sense are
counterparts of
$\lambda$-superave\-raging functions in the opposite direction of
domination. We say that a nonnegative
Borel function $\varphi$ is \emph{$\lambda$-subaveraging} for the
semigroup $\{T_t\dvtx  t \geq0\}$ with $\lambda\geq0$
if for every $t>0$ and $x \in\mathbf{R}^d$ we have $e^{\lambda t} T_t
\varphi
(x) \geq\varphi(x)$.

For an open set $D \subset\mathbf{R}^d$, a Kato-class potential $V$
and a
nonnegative or bounded Borel function
$\varphi$ we define the \emph{$V$-Green operator} for the semigroup
$\{T_t\dvtx  t \geq0\}$ and set $D$ (see Section~\ref{subsecprel}),
\[
G^V_D \varphi(x) = \mathbf{E}^x \biggl[
\int_0^{\tau_D} e^{-\int_0^t
V(X_s)\,ds}\varphi(X_t)\,dt
\biggr], \qquad x \in D,
\]
where $\tau_D$ is the first exit time from $D$.

The following estimates for $\lambda$-subaveraging functions will be
used in proving bounds on eigenfunctions and intrinsic ultracontractivity.

\begin{theorem}
\label{thmdefic1}
Let Assumptions~\ref{assassnu}--\ref{assasspinning} hold. If
$\varphi
$ is a bounded $\lambda$-subavera\-ging
function for the semigroup $\{T_t\dvtx  t \geq0\}$ with $\lambda\geq0$,
then there
is a constant $C_4=C_4(X,V,\lambda)$
and $R=R(X,V,\lambda)>0$
such that
\[
\varphi(x) \leq C_4 \llVert \varphi\rrVert _{\infty} \nu(x),
\qquad|x| \geq R.
\]
\end{theorem}

The proof of this theorem is probably the most involved and crucial
part of the paper. The required bound is
obtained inductively, stemming from a new idea based on a
self-improving estimate iterated infinitely many
times. The main difficulty is that we need to have a statement on
strictly $\nu(x)$ rather than $\nu(cx)$
for some $c \in(0,1)$. This is particularly critical in the case of
exponentially localized L\'evy measures,
which are of special interest in our further investigations.

For simplicity, we write $\mathbf{1}(x)$ instead of $\mathbf
{1}_{\mathbf{R}^d}(x)$ throughout below.

\begin{theorem}
\label{thmdefic2}
Let Assumptions~\ref{assassnu}--\ref{assasspinning} hold.
\begin{longlist}[(2)]
\item[(1)]
If $\varphi$ is a bounded function (possibly negative) for which there
exists $\lambda>0$ such that
for every $t>0$ and $x \in\mathbf{R}^d$ we have $e^{\lambda t} T_t
\varphi
(x)=\varphi(x)$ (clearly, in this
case $|\varphi|$ is $\lambda$-subaveraging), then there is a constant
$C_5=C_5(X,V,\lambda)$ and $R=
R(X,V,\lambda)>0$ such that
\[
\bigl|\varphi(x)\bigr| \leq C_5 \llVert \varphi\rrVert _{\infty}
G^V_{B(x,1)}\mathbf{1}(x) \nu(x), \qquad|x| \geq R.
\]
\item[(2)]
If $\varphi$ is a strictly positive function for which there is
$\lambda> 0$ such that for every $t>0$
and $x \in\mathbf{R}^d$ we have $e^{\lambda t} T_t\varphi(x) =
\varphi(x)$,
then there is a constant $C_6=C_6(X,\varphi)$
and $R=R(X,V,\lambda)>0$ such that
\[
\varphi(x) \geq C_6 G^V_{B(x,1)}\mathbf{1}(x)
\nu(x), \qquad|x| \geq R.
\]
\end{longlist}
\end{theorem}

\subsection{Eigenfunction estimates}\label{sec2.3}

The following pointwise upper bounds for eigenfunctions and sharp
two-sided bounds for the ground state of
the operators $T_t$ are the next main results of this paper.

\begin{theorem}[(Upper bounds on eigenfunctions)]\label{thmbsest}
If Assumptions~\ref{assassnu}--\ref{assasspinning} hold, then for
every $n \in \{0, 1, 2, \ldots \}$ and $\eta\geq0$ such that
$\lambda_0 + \eta>0$, there exists a constant $C_7=C_7(X,V,n,\eta)$
and a radius $R=R(X,V,n,\eta)>0$ such that
\[
\bigl|\varphi_n(x)\bigr| \leq C_7 G^{V+\eta}_{B(x,1)}
\mathbf{1}(x) \nu(x), \qquad|x| \geq R.
\]
\end{theorem}

\begin{theorem}[(Ground state estimates)]\label{thmgsest}
If Assumptions~\ref{assassnu}--\ref{assasspinning} hold, then for
every $\eta\geq0$ such that
$\lambda_0 + \eta>0$ there exist constants $C_8=C_8(X,V,\eta)$,
$C_9=C_9(X,V,\eta)$ and a radius $R=R(X,V,\eta)>0$
such that
\[
C_8 G^{V+\eta}_{B(x,1)}\mathbf{1}(x) \nu(x) \leq
\varphi _0(x) \leq C_9 G^{V+\eta}_{B(x,1)}
\mathbf{1}(x) \nu(x), \qquad|x| \geq R.
\]
\end{theorem}

We emphasize that the above bounds on the eigenfunctions are obtained
by using a completely new idea in this context, without
using any (intrinsic) ultracontractivity properties of $\{T_t\dvtx  t \geq
0\}$,
unlike in \cite{bibKS}, which will be further discussed below.

The following domination property is an immediate consequence of the
above theorems. We note that this is in contrast with Brownian motion,
for which it does not occur if the growth of the potential $V$ at
infinity is not fast enough [see further discussion in Example~\ref{exex6}(5) and compare with (\ref{eqiucdomination}) below].

\begin{corollary}\label{surp1}
\label{cordom}
If Assumptions~\ref{assassnu}--\ref{assasspinning} hold, then for
every $n \in \{1, 2, \ldots \}$ there is a constant
$C_{10}=C_{10}(X,V,n)$
such that
\[
\bigl|\varphi_n(x)\bigr| \leq C_{10} \varphi_0(x),
\qquad x \in\mathbf{R}^d.
\]
\end{corollary}

By the estimates in (\ref{eqgsVest}), we also have the following corollary.

\begin{corollary}
\label{comparable}
Let Assumptions~\ref{assassnu}--\ref{assasspinning} hold. Then for
every $n \in \{0,1, 2, \ldots \}$ there exists a radius
$R=R(X,V,n)>0$ such that
\[
\bigl|\varphi_n(x)\bigr| \leq C_7 \frac{\nu(x)}{\inf_{y \in B(x,1)} V(y)}, \qquad|x|
\geq R
\]
and
\[
C_{11} \frac{\nu(x)}{\sup_{y \in B(x,1)} V(y)} \leq \varphi _0(x) \leq
C_9 \frac{\nu(x)}{\inf_{y \in B(x,1)} V(y)}, \qquad|x| \geq R
\]
with some constant $C_{11}=C_{11}(X,V)$. In particular, if for some $n
\in \{0,1, 2, \ldots \}$ there is a constant $C>1$ such
that for
all unit balls $B \subset B(0,R)^c$ it holds
that $\sup_{y \in B} V(y) \leq C \inf_{y \in B} V(y)$ (cf. Assumption
\ref{assasscomp} below), then
\[
\bigl|\varphi_n(x)\bigr| \leq C_7 C \frac{\nu(x)}{V(x)},
\qquad|x| \geq R+1
\]
and
\[
C_{11} C^{-1} \frac{\nu(x)}{V(x)} \leq \varphi_0(x)
\leq C_9 C \frac{\nu(x)}{V(x)}, \qquad|x| \geq R+1.
\]
\end{corollary}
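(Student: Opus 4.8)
The plan is to deduce the corollary directly from the eigenfunction estimates of Theorems~\ref{thm:bsest} and~\ref{thm:gsest}, feeding into them the elementary two-sided bound \eqref{eq:gsVest} on the quantity $G^{V+\eta}_{B(x,1)}\1(x)$, and then to use the comparability hypothesis on $V$ to pass from $\inf_{B(x,1)}V$ and $\sup_{B(x,1)}V$ to $V(x)$ itself.

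First I would record the bound behind \eqref{eq:gsVest}: for a fixed $\eta\ge0$ with $\lambda_0+\eta>0$ there are $c_1=c_1(X,\eta)>0$ and $R_0>0$ such that
\[
\frac{c_1}{\sup_{y\in B(x,1)}V(y)}\;\le\; G^{V+\eta}_{B(x,1)}\1(x)\;\le\;\frac{1}{\inf_{y\in B(x,1)}V(y)},\qquad |x|\ge R_0.
\]
Write $\tau=\tau_{B(x,1)}$, $m=\inf_{y\in B(x,1)}V(y)$, $M=\sup_{y\in B(x,1)}V(y)$; since $V(x)\to\infty$ we may take $R_0$ so large that $m\ge 1\vee\eta$ whenever $|x|\ge R_0$. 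On $\{t<\tau\}$ the path stays in $B(x,1)$, so $\int_0^t(V(X_s)+\eta)\,ds\ge(m+\eta)t$, hence $G^{V+\eta}_{B(x,1)}\1(x)\le\ex^x\!\int_0^\infty e^{-(m+\eta)t}\,dt=(m+\eta)^{-1}\le m^{-1}$, which is the upper bound. For the lower bound, on $\{t<\tau\}$ we also have $\int_0^t(V(X_s)+\eta)\,ds\le(M+\eta)t$, so $G^{V+\eta}_{B(x,1)}\1(x)\ge(M+\eta)^{-1}\ex^x[1-e^{-(M+\eta)\tau}]$; fixing $\delta\in(0,1]$, using $M+\eta\ge1$ and translation invariance of $\pro X$,
\[
\ex^x[1-e^{-(M+\eta)\tau}]\;\ge\;(1-e^{-\delta})\,\pr^x(\tau\ge\delta)\;=\;(1-e^{-\delta})\,\pr^0\!\big(\tau_{B(0,1)}\ge\delta\big)\;=:\;c_\delta\;>\;0,
\]
where positivity of $c_\delta$ (for $\delta$ small) follows from stochastic continuity of the process started at the centre of the ball. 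Since $M+\eta\le 2M$ for $|x|$ large, this gives the lower bound with $c_1=c_\delta/2$.

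Second I would substitute these bounds into Theorems~\ref{thm:bsest} and~\ref{thm:gsest} (keeping the same $\eta$ and taking $R$ to be the largest of the radii that appear): for $|x|\ge R$,
\[
|\varphi_n(x)|\;\le\; C_7\,\frac{\nu(x)}{\inf_{y\in B(x,1)}V(y)},\qquad C_{11}\,\frac{\nu(x)}{\sup_{y\in B(x,1)}V(y)}\;\le\;\varphi_0(x)\;\le\; C_9\,\frac{\nu(x)}{\inf_{y\in B(x,1)}V(y)},
\]
with $C_{11}:=c_1C_8$, which is the first pair of displays in the corollary. For the ``in particular'' statement, assume $\sup_{y\in B}V(y)\le C\inf_{y\in B}V(y)$ for every unit ball $B\subset B(0,R)^c$. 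If $|x|\ge R+1$ then every $y$ with $|y-x|<1$ satisfies $|y|>|x|-1\ge R$, so $B(x,1)\subset B(0,R)^c$ and the comparison applies to $B=B(x,1)$; since $x\in B(x,1)$ it follows that $\inf_{y\in B(x,1)}V(y)\ge V(x)/C$ and $\sup_{y\in B(x,1)}V(y)\le C\,V(x)$. Plugging these into the three inequalities above yields exactly the stated bounds, with constants $C_7C$, $C_{11}C^{-1}$ and $C_9C$, for $|x|\ge R+1$.

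I do not expect a real obstacle here; the corollary is an essentially elementary consequence of the two theorems. The two points needing a little attention are the uniform-in-$x$ lower bound $\pr^x(\tau_{B(x,1)}\ge\delta)\ge c_\delta$, which is immediate from translation invariance of $\pro X$, and the possibility that $V$ has strong local singularities so that $\sup_{y\in B(x,1)}V(y)=+\infty$ for arbitrarily large $|x|$ — in that case the lower bound for $\varphi_0$ in the first block is only the trivial $\varphi_0\ge0$, but this cannot happen under the comparability hypothesis used in the ``in particular'' part, where $\sup_{B(x,1)}V\le C\inf_{B(x,1)}V<\infty$.
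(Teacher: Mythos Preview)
Your proposal is correct and follows essentially the same route as the paper: the corollary is stated there as an immediate consequence of Theorems~\ref{thm:bsest} and~\ref{thm:gsest} together with the elementary two-sided bound \eqref{eq:gsVest} on $G^{V+\eta}_{B(x,1)}\1(x)$, and you have simply written out a self-contained derivation of (a version of) \eqref{eq:gsVest} and carried through the substitution and comparability step explicitly. The only minor remark is that the positivity of $\pr^0(\tau_{B(0,1)}\ge\delta)$ is perhaps more safely attributed to right-continuity of paths (hence $\tau_{B(0,1)}>0$ a.s.) or to Pruitt's result cited elsewhere in the paper, rather than to stochastic continuity alone.
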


Ground state decays for specific examples are discussed in Section~\ref{sec4.2} below.

\subsection{Intrinsic ultracontractivity and ground state domination}\label{sec2.4}
\label{subsecIUC}
Under the given choice of potential the Feynman--Kac semigroup has
strong smoothing properties which we define
next. In particular, they imply degrees of regularity and the rate of
decay of eigenfunctions. Recall that the
intrinsic Feynman--Kac semigroup is given by (\ref{IFK}).

\begin{definition}[(IUC/AIUC)]
\label{defIUC}
Consider the following ultracontractivity properties:
\begin{longlist}[(2)]
\item[(1)]
The semigroup $\{T_t\dvtx  t \geq0\} $ is \emph{ultracontractive} if $T_t$
is a
bounded operator from $L^2(\mathbf{R}^d)$ to
$L^{\infty}(\mathbf{R}^d)$, for every $t>0$.

\item[(2)]
The semigroup $\{T_t\dvtx  t \geq0\} $ is \emph{intrinsically ultracontractive}
(abbreviated as~IUC) if $\widetilde T_t$ is a
bounded operator from $L^2(\mathbf{R}^d,\varphi_0^2\,dx)$ to
$L^{\infty}(\mathbf{R}^d)$,
for every $t>0$.

\item[(3)]
The\vspace*{1pt} semigroup $\{T_t\dvtx  t \geq0\}$ is \emph{$t_0$-intrinsically
ultracontractive}
(abbreviated as $t_0$-IUC) if the above
boundedness property of $\widetilde T_t$ holds for some $t_0 >0$. In
this case, the semigroup property extends
it to all $t \geq t_0$.

\item[(4)]
When $\{T_t\dvtx  t \geq0\}$ is $t_0$-IUC, but the specific value of $t_0$
is not
essential, we simply say that $\{T_t\dvtx  t \geq0\}$
is \emph{asymptotically intrinsically ultracontractive} (abbreviated
as AIUC).
\end{longlist}
\end{definition}

A remarkable consequence of IUC-type properties is the following
domination property for eigenfunctions. If for some $t>0$, the
semigroup $\{T_t\dvtx  t \geq0\}$ is $t$-IUC, then there is a constant $C=C(X,V,t)$
such that (see, e.g., \cite{bibB}, (1.7))
%
\begin{equation}
\label{eqiucdomination} \bigl|\varphi_n(x)\bigr| \leq C e^{(\lambda_n-\lambda_0) t}
\varphi_0(x), \qquad x \in\mathbf{R}^d,  n \geq1.
\end{equation}
Clearly, if $\{T_t\dvtx  t \geq0\}$ is IUC, then (\ref{eqiucdomination})
holds for
all $t>0$. Unlike in Corollary~\ref{cordom}, here the dependence on
$n$ of the expression on the right-hand side of the inequality is more explicit.

Since below we mainly use probabilistic arguments, it will be useful to
consider the following property of the
semigroup $\{T_t\dvtx  t \geq0\}$ which we call \emph{ground state
domination}. As it
will be seen later, in general ground
state domination is a weaker property than IUC.

\begin{definition}[(GSD/AGSD)]
Consider the following boundedness properties:
\begin{longlist}[(2)]
\item[(1)]
The semigroup $\{T_t\dvtx  t \geq0\}$ is \emph{ground state dominated} (abbreviated
as GSD) if for every $t>0$ there is a
constant $C_{12}=C_{12}(X,V,t)$ such that
%
\begin{equation}
\label{eqdefiuc} T_t \mathbf{1}(x) \leq C_{12}
\varphi_0(x), \qquad x \in\mathbf{R}^d.
\end{equation}
\item[(2)]
The semigroup $\{T_t\dvtx  t \geq0\}$ is \emph{$t_0$-ground state dominated}
(abbreviated as \mbox{$t_0$-}GSD) if (\ref{eqdefiuc}) holds
for some $t_0 >0$. In this case, the semigroup property extends this
bound to all $t
\geq t_0$
with constant $C_{12} e^{-\lambda_0 (t-t_0)}$, where $C_{12}=C_{12}(X,V,t_0)$.
\item[(3)]
When $\{T_t\dvtx  t \geq0\}$ is $t_0$-GSD but the specific value of $t_0$
is not
essential, we simply say that $\{T_t\dvtx  t \geq0\}$
is \emph{asymptotically ground state dominated} (abbreviated as AGSD).
\end{longlist}
\end{definition}

Before stating the main results of this subsection, it is worthwhile to
discuss the relationship between these
properties. Parts (1)--(2) in Definition~\ref{defIUC} are standard, (4)
has been introduced in \cite{bibKL}. It is
immediate from the definitions that IUC and GSD imply $t_0$-IUC and
$t_0$-GSD (for all $t_0>0$), respectively. However,
as it will be seen below, IUC and GSD are essentially stronger
properties than their asymptotic versions. We will show
that under our assumptions AIUC and AGSD are equivalent, while IUC
implies GSD and in general conversely this is not
the case.

\begin{theorem}[{[(A)IUC and (A)GSD]}]
\label{thmiucvelgsd}
Let Assumptions~\ref{assassnu}--\ref{assasspinning} be satisfied,
specifically, let Assumption~\ref{assassdensity}
hold with $t_{b}>0$.
\begin{longlist}[(2)]
\item[(1)]
Then
\[
\mathrm{AIUC} \Longleftrightarrow \mathrm{AGSD}\quad\mbox {and}\quad
\mathrm{IUC} \Longrightarrow \mathrm{GSD}
\]
in the sense that $t_0$-IUC $\Longrightarrow$ $2t_0$-GSD, $t_0 > 0$,
and $t_0$-GSD $\Longrightarrow$ $2t_0$-IUC,
whenever $t_0 \geq t_{b}$.

\item[(2)]
If, moreover, $p(t,x) \leq C_4$ for all $t>0$ and $x \in\mathbf{R}^d$ with
$C_4=C_4(X,t)$, then also
\[
\mathrm{GSD} \Longrightarrow \mathrm{IUC}.
\]
\end{longlist}
\end{theorem}

In Proposition~\ref{propultracontractivity} below, we show that the
assumption in part (2) of Theorem~\ref{thmiucvelgsd} is essential. This means that in general when
$p(t,\cdot)$ may be unbounded for
small $t$, IUC is a strictly stronger property than GSD. Intuitively,
it is clear that IUC requires
more smoothness of the semigroup $\{T_t\dvtx  t \geq0\}$ than GSD as it
also depends
on the local singularities of
the semigroup, while GSD is in fact, roughly speaking, a decay property
of the semigroup at infinity.

We now present characterization results on GSD/AGSD and IUC/AIUC.

\begin{theorem}[(Sufficient and necessary conditions for AGSD)]
\label{thmsuffiuc}
Let Assumptions~\ref{assassnu}--\ref{assasspinning} hold.
\begin{longlist}[(2)]
\item[(1)]
If there exist a constant $C_{13}=C_{13}(X,V)$ and a radius $R > 0$
such that
\[
\frac{V(x)}{|\log\nu(x)|} \geq C_{13}, \qquad|x| \geq R,
\]
then the semigroup $\{T_t\dvtx  t \geq0\}$ is $t_0$-GSD with $t_0 = 4/C_{13}$.

\item[(2)]
If the semigroup $\{T_t\dvtx  t \geq0\}$ is $t_0$-GSD, then for every
$\varepsilon
\in(0,1]$ there is $R_{\varepsilon} > 0$
such that
\[
\frac{\sup_{y \in B(x,\varepsilon)} V(y)}{|\log\nu(x)|} \geq\frac
{1}{2t_0}, \qquad|x| \geq R_{\varepsilon}.
\]
\end{longlist}
\end{theorem}

\begin{theorem}[(Sufficient and necessary conditions for GSD)]\label{thmneciuc}
Let Assumptions~\ref{assassnu}--\ref{assasspinning} hold.
\begin{longlist}[(2)]
\item[(1)]
If
\[
\lim_{|x| \to\infty} \frac{V(x)}{|\log\nu(x)|} = \infty,
\]
then the semigroup $\{T_t\dvtx  t \geq0\}$ is GSD.

\item[(2)]
If the semigroup $\{T_t\dvtx  t \geq0\}$ is GSD, then for every
$\varepsilon\in
(0,1]$ we have
\[
\lim_{|x| \to\infty} \frac{\sup_{y \in B(x,\varepsilon)}
V(y)}{|\log\nu
(x)|} = \infty.
\]
\end{longlist}
\end{theorem}

\begin{remark}
By Theorem~\ref{thmiucvelgsd}, the limit condition in Theorem~\ref{thmneciuc}(2) is
necessary for IUC, and the condition in Theorem~\ref{thmneciuc}(1) is
sufficient for IUC,
whenever $p(t,\cdot)$ is bounded for all $t>0$. Similarly, the growth
condition in Theorem
\ref{thmsuffiuc}(2) is necessary for $t_0/2$-IUC, and the condition
in Theorem~\ref{thmsuffiuc}(1) is sufficient for $t_0$-IUC with $t_0 = 2(t_b \vee4/C_{13})$.
\end{remark}

The following result is intuitively clear, however, for the reader's
convenience we include a short
proof at the end of Section~\ref{subsecprel}.

\begin{proposition}[(Ultracontractivity)]\label{propultracontractivity}
Let Assumptions~\ref{assassnu}--\ref{assasspinning} hold, in
particular, let Assumption~\ref{assassdensity}
hold with $t_{b}>0$. Furthermore, suppose that:
\begin{longlist}[(2)]
\item[(1)]
there exists $t < t_{b}$ such that $\lim_{|x| \to0^{+}}p(t,x) =
\infty
$, and for all $s \in(0,t]$ we have that $p(s,x)
\geq p(s,y)$ whenever $|x| \leq|y|$;

\item[(2)]
there exist $x_0 \in\mathbf{R}^d$ and $\varepsilon> 0$ such that $V$ is
bounded from above in $B(x_0,\varepsilon)$.
\end{longlist}
Then\vspace*{1pt} for every $0<t<t_{b}$ for which condition (1) is satisfied, the
operator $T_{t/2}$ is not bounded from
$L^2(\mathbf{R}^d)$ to $L^{\infty}(\mathbf{R}^d)$. In particular,
the semigroup $\{T_t\dvtx  t \geq0\}$ is not ultracontractive.
\end{proposition}

Since $\varphi_0$ is bounded, IUC implies ultracontractivity. Hence,
the above result shows that the assumption
in assertion (2) of Theorem~\ref{thmiucvelgsd} is essential. This
means that there exists a class of random
processes whose Feynman--Kac semigroups are GSD but not IUC (even if
the potential grows to infinity at
infinity very quickly). Typical examples of L\'evy processes fitting
the above proposition include subordinate
Brownian motion with suitably slowly varying characteristic exponents
such as geometric stable processes. This
example will be discussed in more detail in Section~\ref{sec4.2}.

For the remainder of this subsection, we restrict attention to a
somewhat smaller class of potentials by imposing
more regularity. This will also be used in the next subsection.

\begin{assumption}
\label{assasscomp}
There exist $R>1$ and a constant $C_{14}=C_{14}(V)$ such that for every
$|x| > R$
%
\begin{equation}
\label{eqMq} V(y) \leq C_{14} V(x), \qquad y \in B(x,1)
\end{equation}
holds.
\end{assumption}

A straightforward consequence of the above theorems is the following result.

\begin{corollary}[(Borderline case)]\label{corborder}
Let Assumptions~\ref{assassnu}--\ref{assasscomp} hold, in particular,
let Assumption~\ref{assassdensity}
hold with $t_{b}>0$. Then we have the following:
\begin{longlist}[(2)]
\item[(1)]
The semigroup $\{T_t\dvtx  t\geq0\}$ is GSD if and only if
%
\begin{equation}
\label{eqcondcomp1} \lim_{|x| \rightarrow\infty} \frac{V(x)}{|\log\nu(x)|} = \infty.
\end{equation}
Moreover, condition (\ref{eqcondcomp1}) is necessary for IUC, and
sufficient whenever $p(t, \cdot)$ is
bounded for every fixed $t>0$.

\item[(2)]
The semigroup $\{T_t\dvtx  t\geq0\}$ is AGSD (or, equivalently, AIUC) if
and only if there exist a constant
$C_{15}$ and $R>0$
such that
%
\begin{equation}
\label{eqcondcomp2} \frac{V(x)}{|\log\nu(x)|} \geq C_{15}, \qquad|x| \geq R.
\end{equation}
Specifically, if (\ref{eqcondcomp2}) is satisfied, then $\{T_t\dvtx  t\geq
0\}$ always is $t_0$-GSD with $t_0=4/C_{15}$,
and it is $t_0$-IUC with $t_0 = 2(t_{b} \vee4/C_{15})$. If $\{T_t\dvtx
t\geq0\}$ is $t_0$-GSD, then (\ref{eqcondcomp2})
holds with constant $C_{15}=1/(2 C_{14} t_0)$. Similarly, $t_0$-IUC
implies (\ref{eqcondcomp2}) with $C_{15}=
1/(4 C_{14} t_0)$.
\end{longlist}
\end{corollary}

By the above results, we are now able to formally define borderline potentials.

\begin{definition}[(Borderline potential)]
\label{defborder}
Let Assumptions~\ref{assassnu}--\ref{assasspinning} hold. We call $V$
\emph{borderline potential} for (A)GSD/(A)IUC
of the semigroup $\{T_t\dvtx  t \geq0\}$ if there exist $t_0>0$ and $R>0$
such that
$t_0 V(x) = |\log\nu(x)|$, for every $x \in
B(0,R)^c$.
\end{definition}

Note that by Assumption~\ref{assassnu} the borderline potentials
always satisfy Assumption~\ref{assasscomp}. Also,
note that we speak of borderline potentials in the sense of equivalence
classes given by the definition above.
The examples of possible borderline potentials for different classes of
L\'evy processes are discussed in Section~\ref{subsecexres}.

\subsection{Probabilistic and variational interpretation of AGSD/AIUC}\label{sec2.5}
It was seen in the previous subsection that under Assumptions~\ref{assassnu}--\ref{assasspinning} AGSD/AIUC of
$\{T_t\dvtx  t \geq0\}$ depends only on the intensity of large jumps of the process
$(X_t)_{t \geq0}$. This means that whenever $\nu\neq0$,
the Gaussian and small jump parts of the process have no impact on
AGSD/AIUC. Indeed, the borderline
growth of $V$ is decided by the ratio $e^{-t_0V(x)}/\nu(x)$ for $x$
sufficiently
far away from the origin and some time point $t_0>0$. More precisely,
AGSD/AIUC occurs if and only if $e^{-t_0 V(x)}$
is uniformly dominated by the jump intensity $\nu(x)$ outside a bounded
set in $\mathbf{R}^d$. We note that although this
description gives a full picture of what AGSD/AIUC is in the case when
$\nu\neq0$, it does not help to understand
what is behind this property when the process is strictly diffusive,
that is, whenever $\nu= 0$. (In a sense, this
situation confirms that Brownian motion is an exceptional L\'evy
process and processes with jumps are the more
generic.) In this section, we discuss probabilistic and variational
descriptions of these properties.

It is straightforward that the condition on $V$ for $\{T_t\dvtx  t \geq0\}$ being
AGSD/\break AIUC is much weaker
than in the case of the Feynman--Kac semigroup for diffusions. This can
be explained by the following heuristic
interpretation. For our purposes here, it suffices to observe that the
effect of the potential on the distribution
of paths is a concurrence of killing at a rate of $e^{-\int_0^t
V_{+}(X_s)\,ds}$ and mass generation at a rate of
$e^{\int_0^t V_{-}(X_s)\,ds}$. When, however, $V(x) \to\infty$ as $|x|
\to\infty$, then outside a compact set
only the killing effect occurs and $\mathbf{E}^x[e^{-\int_0^t V(X_s)\,ds}]$
gives the probability of survival of the
process up to time $t$. The following characterization of AGSD/AIUC may
be used as a probabilistic definition
of these properties, valid for both our jump L\'evy processes and
Brownian motion. In fact, this property has a strong ergodic
flavor; compare also with \cite{bibDa,bibKP}.

\begin{proposition}
\label{propprobchar}
The semigroup $\{T_t\dvtx  t \geq0\}$ is AGSD/AIUC if and only if there
exist $t>0$,
a bounded nonempty Borel set $D \subset
\mathbf{R}^d$, and a constant $C_{16}=C_{16}(X,V,t)$ such that for
every Borel
set $A \subseteq\mathbf{R}^d$ we have
%
\begin{equation}
\label{eqprobchar} \mathbf{E}^x \bigl[e^{-\int_0^t V(X_s)\,ds}; X_t
\in A \bigr] \leq C_{16} \mathbf{E} ^x
\bigl[e^{-\int_0^t V(X_s)\,ds}; X_t \in D \bigr], \qquad x \in
\mathbf{R}^d.\hspace*{-30pt}
\end{equation}
\end{proposition}

(For a proof, see \cite{bibKL}, Corollary 4.1, Proposition 4.1, and
\cite{bibKS}, equations \mbox{(1.2)--(1.3)}.) Asymptotically,
the probability of survival of\vspace*{1pt} the process staying around the starting
point $x$ (far from the region $D$) is
approximately $e^{-t V(x)}$, while the probability of surviving by
escaping to a region $D$ with a lower killing
rate is $\mathbf{P}^x(X_t \in D)$. By using (\ref{eqprobchar}), it is
immediately seen that when $\{T_t\dvtx  t \geq0\}$ is AGSD/AIUC,
then the probability that the process under $V$ survives up to time~$t$
far from the location of $\inf V$ is
bounded by the probability that the process survives up to time $t$ and
is in some bounded region $D$, no matter
its starting point. It can be expected that the balance of the
competing effects in fact will be decided roughly
by the ratio $V(x)/|\log\mathbf{P}^x(X_t \in D)|$. Below we prove this
\mbox{intuition} and show that for a large class of
nondiffusive L\'evy processes the expression $|\log\mathbf{P}^x(X_t
\in D)|$
precisely determines the borderline
potential. Note that this expression does not give the borderline
potential for diffusions, however, it allows
to identify the leading order of the borderline growth which is known
to be quadratic~\cite{bibDS}. Some further
examples will be discussed below.

The following comparability condition will be used in Propositions~\ref{propAIUCproba}--\ref{propIUCproba}
only. It is partly satisfied under our previous assumptions and it
appears to be strongly related to Assumption
\ref{assassnu}. However, we are not aware of a general argument
showing a possible implication, and thus we
formulate it as an independent assumption.

\begin{assumption}
\label{assasspnu}
For every $t>0$, there is $R=R(t)>0$ such that
\[
\bigl|\log\nu(x)\bigr| \asymp C_{17} \bigl|\log p(t,x)\bigr| \asymp C_{18} \bigl|
\log\mathbf{P} ^x\bigl(X_{t} \in B(0,1)\bigr)\bigr|,
\qquad|x|>R
\]
with constants $C_{17}=C_{17}(X)$ and $C_{18}=C_{18}(X)$ (independent
of $t$).
\end{assumption}

The next two propositions are direct consequences of Assumption~\ref{assasspnu} and Theorems
\ref{thmsuffiuc}--\ref{thmneciuc}.

\begin{proposition}[(AGSD/AIUC probabilistically)]
\label{propAIUCproba}
Let Assumptions~\ref{assassnu}--\ref{assasspinning} and~\ref{assasspnu} be satisfied. Then the following hold:
\begin{longlist}[(2)]
\item[(1)]
If the semigroup $\{T_t\dvtx  t \geq0\}$ is $t_0$-GSD (or $t_0/2$-IUC),
then for
every \mbox{$0<\varepsilon\leq1$} there is $R \geq2$
such that
\begin{eqnarray}
\frac{\sup_{y \in B(x,\varepsilon)} V(y)}{|\log\mathbf
{P}^x(X_{t_0} \in
B(0,1))|} \geq\frac{1}{2 C^2_{17} C_{18} t_0} \quad\mbox{and} \quad
\frac{\sup_{y \in B(x,\varepsilon)} V(y)}{|\log p(t_0,x)|} \geq \frac{1}{2 C_{17} t_0},\nonumber
\\
\eqntext{|x| \geq R.}
\end{eqnarray}
Moreover, if also Assumption~\ref{assasscomp} holds, then $\sup_{y
\in
B(x,\varepsilon)} V(y)$ can be replaced by
$C_V V(x)$.

\item[(2)]
If there exist $t>0$, $R>0$ and a constant $C_{19}=C_{19}(X,V)$ such that
\[
\frac{V(x)}{|\log\mathbf{P}^x(X_{t} \in B(0,1))|} \geq\frac
{1}{C_{19} t} \quad\mbox{or}\quad\frac{V(x)}{|\log p(t,x)|}
\geq\frac{1}{C_{19} t}, \qquad|x|>R,
\]
then $\{T_t\dvtx  t \geq0\}$ is $t_0$-GSD with $t_0= 4 C_{18} C_{19} t$ and
$t_0$-IUC
with $t_0 = 2(t_{b} \vee4 C_{18} C_{19} t)$ or
$t_0$-GSD with $t_0= 4 C_{17} C_{19} t$ and $t_0$-IUC with $t_0 =
2(t_{b} \vee4 C_{17} C_{19} t)$, respectively.
\end{longlist}
\end{proposition}

\begin{proposition}[(GSD/IUC probabilistically)]
\label{propIUCproba}
Let Assumptions~\ref{assassnu}--\ref{assasspinning} and~\ref{assasspnu} be satisfied. Then the following hold:
\begin{longlist}[(2)]
\item[(1)]
If the semigroup $\{T_t\dvtx  t \geq0\}$ is GSD (or IUC), then for every
$t>0$ we have
\[
\lim_{|x| \to\infty} \frac{\sup_{y \in B(x,\varepsilon
)}V(y)}{|\log\mathbf{P}
^x(X_{t} \in B(0,1))|} = \lim_{|x| \to\infty}
\frac{\sup_{y \in B(x,\varepsilon
)}V(y)}{|\log
p(t,x)|} = \infty.
\]
When in addition also Assumption~\ref{assasscomp} holds, then $\sup_{y
\in B(x,\varepsilon)}V(y)$ may be replaced by
$V(x)$.

\item[(2)]
If there is $t>0$ such that
\[
\lim_{|x| \to\infty} \frac{V(x)}{|\log\mathbf{P}^x(X_{t} \in
B(0,1))|} = \infty\quad\mbox{or}\quad\lim
_{|x| \to\infty} \frac{V(x)}{|\log p(t,x)|} = \infty,
\]
then $\{T_t\dvtx  t \geq0\}$ is GSD. If, moreover, $p(t,\cdot)$ is bounded
for all
$t>0$, then any of these two conditions also implies
IUC.
\end{longlist}
\end{proposition}

Finally, we give another description of AGSD/AIUC. In order to do that,
we need to put one more condition
on the L\'evy measure.

\begin{assumption}
\label{assassnulog}
For every $R>0$, we have $\log\nu\in L^1(B(0,R)^c, \nu(x)\,dx)$.
\end{assumption}

Under Assumptions~\ref{assassnu}--\ref{assasscomp} and~\ref{assassnulog}, and for all $A \in\mathcal{B}(\mathbf{R}^d)$
such that $\operatorname{dist}(A,0)>0$ we define the functionals
%
\begin{eqnarray}
E^V_A(\nu) &=& \int_A V(x)\nu(x) \,dx, \label{energ}
\\
H_A(\nu) &=& -\int_A \nu(x)\log\nu(x) \,dx,\label{entrop}
\\
F^V_A(\nu) &=& E^V(\nu) -H_A(\nu). \label{free}
\end{eqnarray}
Note that
under Assumption~\ref{assassnulog} $F^V_A(\nu)$ is well defined. We
call the functional $E^V_A(\nu)$ \emph{energy},
$H_A(\nu)$ \emph{entropy} and $F^V_A(\nu)$ \emph{free energy} in set
$A$ for the given potential $V$ and L\'evy
measure $\nu(dx)$. Note that since $\nu(x)$ is the Radon--Nikod\'ym
derivative of the L\'evy measure with respect
to Lebesgue measure, $H_A(\nu)$ is in fact the relative entropy (or
Kullback--Leibler functional) of the L\'evy
measure with respect to Lebesgue measure. Then we have the following
characterization of AGSD/AIUC.

\begin{theorem}[(Characterization of AGSD/AIUC)]\label{thmcharacter}
Let Assumptions~\ref{assassnu}--\ref{assasscomp} and~\ref{assassnulog} hold. The potential $V$ is such
that the semigroup $\{T_t\dvtx  t \geq0\}$ is AGSD (or, equivalently, AIUC)
if and
only if there exists $t_0>0$ and $R>0$
such that for every Borel set $A \subset B(0,R)^c$ we have that
$F_A^{t_0 V}(\nu) \geq0$. Specifically, if
$\{T_t\dvtx  t \geq0\}$ is $t_0$-GSD, then $F_A^{2C_{14}t_0 V}(\nu) \geq
0$. If
$F_A^{t_0 V}(\nu) \geq0$, then $\{T_t\dvtx  t \geq0\}$
is $8 C_{14} t_0$-GSD.
\end{theorem}

Note that due to monotonicity of the free energy functional with
respect to potential $V$, the inequality
$F_A^{t_0 V}(\nu) \geq0$ implies $F_A^{t V}(\nu) \geq0$ for all $t
\geq t_0$. Furthermore, we have the
following variational result.

\begin{corollary}[(Variational principle for borderline potential)]\label{variational}
Let Assumptions~\ref{assassnu}--\ref{assasspinning} and~\ref{assassnulog} hold, and the jump intensity
$\nu$ and the potential $V$ be continuous functions. Then $V$ is the
borderline potential for AGSD/AIUC of the
semigroup $\{T_t\dvtx  t \geq0\}$ if and only if there exist $t_0>0$ and
$R>0$ such
that $F^{t_0 V}_A(\nu) = 0$ for all Borel
sets $A \subset B(0,R)^c$.
\end{corollary}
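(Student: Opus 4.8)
The plan is to prove Corollary \ref{variational} as a direct computation: it is the ``equality companion'' of Theorem \ref{thm:character}, where the sign condition $F_A^{t_0V}(\nu)\ge 0$ is replaced by $F_A^{t_0V}(\nu)=0$, and correspondingly the conclusion upgrades from ``$\semi T$ is AGSD/AIUC'' to ``$V$ is the borderline potential,'' which by Definition \ref{def:border} means the pointwise identity $t_0V(x)=|\log\nu(x)|$ on the complement of a ball. Two preliminary observations set the stage. First, unwinding the definitions, $F_A^{t_0V}(\nu)=\int_A\bigl(t_0V(x)+\log\nu(x)\bigr)\nu(x)\,dx$ whenever the right-hand side is meaningful, and under Assumption \ref{ass:assnulog} the entropy term $H_A(\nu)$ is finite, so $F_A^{t_0V}(\nu)=0$ forces $V\nu\in L^1(A)$ (note $\{V\le 0\}$ is bounded since $V\to\infty$) and makes the integral absolutely convergent. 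Second, by Assumption \ref{ass:assnu} the intensity $\nu$ is comparable at infinity to a radially non-increasing function, and being a L\'evy density it is integrable there, so $\nu(x)\to 0$ as $|x|\to\infty$; hence there is $R_0>0$ with $\nu(x)\le 1$, equivalently $|\log\nu(x)|=-\log\nu(x)$, for $|x|\ge R_0$. Consequently, on $B(0,R)^c$ with $R\ge R_0$ the borderline identity $t_0V(x)=|\log\nu(x)|$ is literally the same as $t_0V(x)+\log\nu(x)=0$.

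With these reductions both implications become short. If $V$ is a borderline potential, then, enlarging the radius beyond $R_0$, the function $t_0V+\log\nu$ vanishes identically on $B(0,R)^c$, so the integrand defining $F_A^{t_0V}(\nu)$ is identically zero and $F_A^{t_0V}(\nu)=0$ for every Borel $A\subset B(0,R)^c$. Conversely, assume $F_A^{t_0V}(\nu)=0$ for all Borel $A\subset B(0,R)^c$, and enlarge $R$ past $R_0$ and past the bounded set $\{V\le 0\}$. By the first reduction $F_A^{t_0V}(\nu)=\int_A g\,\nu\,dx$ with $g:=t_0V+\log\nu$; testing the hypothesis against $A_{+}:=\{x\in B(0,R)^c: g(x)>0\}$ and $A_{-}:=\{x\in B(0,R)^c: g(x)<0\}$ and using $\nu>0$ forces $|A_{+}|=|A_{-}|=0$, i.e. $g=0$ Lebesgue-almost everywhere on $B(0,R)^c$. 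Since $V$ and $\nu$ are continuous and $\nu>0$, the function $g$ is continuous on the open set $\{|x|>R\}$, hence vanishes there identically; thus $t_0V(x)=-\log\nu(x)=|\log\nu(x)|$ for $|x|\ge R'$ with any $R'>R$, which is precisely the assertion that $V$ is a borderline potential, with the same constant $t_0$.

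The main — indeed the only genuine — obstacle is this last step, the passage from the almost-everywhere identity $g=0$ to the pointwise one demanded by Definition \ref{def:border}: the free-energy functional only constrains $V$ and $\nu$ up to Lebesgue-null sets, and the continuity hypotheses on $\nu$ and $V$ are exactly what is needed to bridge this gap. Everything else is routine bookkeeping: the sign of $\log\nu$ (handled by $\nu(x)\to 0$), the integrability supplied by Assumption \ref{ass:assnulog}, and the fact that $F_A^{tV}(\nu)$ is monotone in $V$, so that the $t_0$ produced above is automatically consistent with the threshold given by Theorem \ref{thm:character}.
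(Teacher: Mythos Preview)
Your proof is correct. The paper itself does not give an explicit proof of this corollary, treating it as an immediate consequence of the definitions and of Theorem \ref{thm:character}; your argument supplies exactly the details one would expect, namely unwinding $F_A^{t_0V}(\nu)=\int_A(t_0V+\log\nu)\,\nu\,dx$, using integrability of $\nu$ away from the origin together with Assumption \ref{ass:assnu}(2) to get $\nu(x)\to 0$ and hence $|\log\nu|=-\log\nu$ for large $|x|$, and then invoking continuity of $V$ and $\nu$ to pass from the a.e.\ identity $t_0V+\log\nu=0$ (forced by testing on the sets $\{t_0V+\log\nu\gtrless 0\}$) to the pointwise identity required by Definition \ref{def:border}.
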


We note that similar energy and entropy functionals have been used in
\cite{bibGar} to determine heavy tailed
probability distributions with prescribed asymptotics, satisfying the
Fokker--Planck equation. Such optimization
methods are widely used, however, in our context it is derived and
rigorously justified by Theorem
\ref{thmcharacter}. Furthermore, the above variational problem can
also be considered in the reverse direction.
Roughly speaking, for a given sufficiently regular potential $V$ we may
be interested in finding the appropriate
L\'evy measures $\nu$ [i.e., L\'evy processes $(X_t)_{t \geq0}$] such
that the
corresponding free energy functional
$F^{t_0 V}_A(\nu)$ is minimized for some $t_0>0$, $R>0$ and every Borel
set $A \subset B(0,R)^c$.

\section{Proofs}\label{sec3}

\subsection{Preliminary results}\label{sec3.1}
\label{subsecprel}
Here, we recall some basic facts of potential theory for the
Feynman--Kac semigroup related to process
$(X_t)_{t \geq0}$ needed for our purposes, and show some technical
facts used in
proving our results concerning
intrinsic ultracontractivity and the eigenfunction estimates below. For
background, we refer to
\cite{bibCZ,bibBB1,bibBB2,bibCS,bibCS1,bibBlG}.

We adopt the convention that auxiliary constants appearing in proofs
may change their values from one
use to another (possibly from line to line). However, if necessary, we
write $C, C^{(1)}, C^{(2)}, \ldots$
to distinguish them. Recall that, in contrast, constants appearing in
the statements of theorems,
propositions and lemmas are fixed throughout the paper and can be
tracked in the proofs.

Denote by
\[
e_V(t):= e_V(t) (\omega) = e^{-\int_0^t V(X_s(\omega)) \,ds}, \qquad
t>0.
\]
The Feynman--Kac-functional for the L\'evy process $(X_t)_{t \geq0}$ for
potential $V$. By standard arguments
based on Khasminskii's lemma (see \cite{bibCZ}, Proposition~3.8,
and \cite{bibLHB}), there are constants $C_{21}=C_{21}(X,V)$
and $C_{22}=C_{22}(X,V)$ such that
%
\begin{equation}
\label{eqkhas} \sup_{x \in\mathbf{R}^d} \mathbf{E}^x
\bigl[e_V(t)\bigr] \leq\sup_{x \in
\mathbf{R}^d} \mathbf{E}
^x\bigl[e_{-V_{-}}(t)\bigr] \leq C_{21}
e^{C_{22}t}, \qquad t>0.
\end{equation}
Recall that $\tau_D = \inf\{t > 0\dvtx    X_t \notin D\}$ denotes the first
exit time of the process from
the set $D$. The potential operator for the semigroup $\{T_t\dvtx  t \geq
0\} $ is defined by
\[
G^V f(x) = \int_0^\infty
T_t f(x) \,dt = \mathbf{E}^x \biggl[\int
_0^\infty e_V(t) f(X_t) \,dt
\biggr]
\]
for nonnegative or bounded Borel functions $f$ on $\mathbf{R}^d$,
while the
$V$-Green operator for an open set $D$
is given by
\[
G^V_D f(x) = \int_0^\infty
\mathbf{E}^x \bigl[t<\tau_D; e_V(t)
f(X_t) \bigr] \,dt = \mathbf{E}^x \biggl[\int
_0^{\tau_D} e_V(t) f(X_t) \,dt
\biggr]
\]
for nonnegative or bounded Borel functions $f$ on $D$.

It can be seen directly that if $D \subset\mathbf{R}^d$ is a nonempty bounded
open set and $V$ is a nonnegative and
not identically zero potential on $D$, then for all $x \in D$ we have
%
\begin{equation}
\label{eqgsVest} \Bigl(1-\exp\Bigl(-\sup_{y \in D} V(y)\Bigr)
\Bigr) \frac{\mathbf{P}^x(\tau
_D > 1)}{\sup_{y \in D} V(y)} \leq G^V_D\mathbf{1}(x) \leq
\frac{1}{\inf_{y \in D} V(y)}.
\end{equation}
Here, we use the convention that $1/\infty= 0$ and $1/0^{+}=\infty$.

Below we often use the fact that for all bounded Borel sets $D \subset
\mathbf{R}^d$ and $x \in D$ we have $\mathbf{E}^x [\tau_D]
\leq\mathbf{E}^x [\tau_{B(x,\operatorname{diam}D)}] = \mathbf{E}^0
[\tau_{B(0,\operatorname{diam}D)}] < \infty
$. Furthermore, when $D' \subset\mathbf{R}^d$ is an
open set, $D \subset D'$ is open and bounded and $f$ is a nonnegative
or bounded Borel function on $D'$, then by
the strong Markov property, it follows for every $x \in D$ that
%
\begin{equation}
\label{eqpot1} G^V_{D'}f(x) = G^V_D
f(x) + \mathbf{E}^x \bigl[X_{\tau_D} \in D'
\setminus D; e_V(\tau _D)G^V_{D'}f(X_{\tau_D})
\bigr].
\end{equation}

A Borel function $f$ on $\mathbf{R}^d$ is called $(X,V)$-harmonic in
an open
set $D \subset\mathbf{R}^d$ if
%
\begin{eqnarray}
\label{defharm} f(x) & =& \mathbf{E}^x \bigl[\tau_U <
\infty; e_V(\tau_U) f(X_{\tau
_U}) \bigr], \qquad
x \in U,
\end{eqnarray}
for every open set $U$ with $\overline{U}$ contained in $D$, and it is
called regular \mbox{$(X,V)$-}har\-mo\-nic in $D$
if (\ref{defharm}) holds for $U=D$. By the strong Markov property,
every regular
$(X,V)$-harmonic function in $D$ is $(X,V)$-harmonic in $D$. We always
assume that the expectation in (\ref{defharm})
is absolutely convergent.

The following uniform estimates for local suprema of $(X,V)$-harmonic
functions are an important ingredient
in proving AGSD/GSD and eigenfunction bounds. Under Assumptions~\ref{assassnu}--\ref{assassbhi}, they directly
follow from the more general results in \cite{bibBKK}.

\begin{lemma}
\label{lmbhiold}
Let Assumptions~\ref{assassnu}(1),~\ref{assassdensity} and~\ref{assassbhi} be satisfied. Then for every
$0<r<p<q<R\leq1$ there exists a constant $C_{23}=C_{23}(X,r,p,q,R)$
such that for any $V \in\mathcal{K}^X_{\operatorname{loc}}$,
$V \geq0$ on $B(x_0,R)$, and every nonnegative function $f$ on
$\mathbf{R}^d$
that is $(X,V)$-regular harmonic on
$B(x_0,R)$, we have
\[
f(y) \asymp C_{23} G^V_{B(x_0,p)}\mathbf{1}(y) \int
_{B(x_0,q)^c}f(z)\nu (z-x_0)\,dz, \qquad|y-x_0|<r.
\]
\end{lemma}

\begin{pf}
Under the assumptions of the lemma, Assumptions A--D in \cite{bibBKK}
are satisfied. Specifically, since
$(X_t)_{t \geq0}$ is a symmetric L\'evy process satisfying the strong Feller
property, Assumptions A~and~B hold
directly, while Assumption~C is a consequence of our Assumption~\ref{assassnu}(1), and our Assumption
\ref{assassbhi} is just Assumption~D (for details of their
verification, see \cite{bibBKK}, Example 5.5).
Thus, the above estimates hold for $V \equiv0$ as a consequence of
\cite{bibBKK}, Lemma 3.2 and Theorem 3.4,
for the set $D=B(x_0,R)$. By space homogeneity of $(X_t)_{t \geq0}$, the
constant $C_{23}$ does not depend on the
specific choice of $x_0$. Similar estimates for an arbitrary $V \in
\mathcal{K}
^X_{\operatorname{loc}}$, $V \geq0$ on $B(x_0,R)$,
that is, for the subprocess of $(X_t)_{t \geq0}$ given by the multiplicative
functional $M_t=e_V(t)$, follow from the
latter with the same constant~$C_{23}$ (independent of $V$) by the
argument in \cite{bibBKK}, Example 5.9.
\end{pf}

Note that it is crucial below that $C_{23}$ in the above bounds does
not depend on the (local behavior of)
the potential $V$. It is also essential for our further applications
that due to space-homogeneity of the
process $(X_t)_{t \geq0}$ the constant is independent of the location
of the
ball $B(x_0,R)$ in space. This is also
the reason why we cannot consider in this paper more general Markov
processes that are not space-homogeneous.
In fact, Lemma~\ref{lmbhiold} will be used below in the following form
which is sufficiently general and
suitable for our purposes.

\begin{corollary}
\label{lmbhi}
Let Assumptions~\ref{assassnu}(1),~\ref{assassdensity} and~\ref{assassbhi} be satisfied.
Then there exists a constant $C_{24}=C_{24}(X)$ such that for any $V
\in\mathcal{K}^X_{\operatorname{loc}}$,
$V \geq0$ on $B(x_0,1)$, and every nonnegative function $f$ on
$\mathbf{R}^d$
that is $(X,V)$-regular harmonic on
$B(x_0,1)$, we have
\[
f(y) \asymp C_{24} G^V_{B(x_0,1)}\mathbf{1}(y) \int
_{B(x_0,3/4)^c}f(z)\nu (z-x_0)\,dz, \qquad|y-x_0|<
\frac{1} 2.
\]
\end{corollary}

\begin{pf}
By taking $r=1/2$, $p=5/8$, $q=3/4$ and $R=1$ in Lemma~\ref{lmbhiold},
we clearly have
\[
f(y) \asymp C_{23} G^V_{B(x_0,5/8)}\mathbf{1}(y) \int
_{B(x_0,3/4)^c}f(z)\nu(z-x_0)\,dz, \qquad|y-x_0|<
\frac{1} 2.
\]
Thus,\vspace*{1pt} it suffices to see that $G^V_{B(x_0,1)}\mathbf{1}(y) \leq C
G^V_{B(x_0,5/8)}\mathbf{1}(y)$, $y \in B(x_0,1/2)$, with a
constant $C=C(X)$ (independent of $V$ and $x_0$). By formula (\ref
{eqpot1}) for $D'=B(x_0,1)$,
$D=B(x_0,5/8)$ and $f=\mathbf{1}$, and by the fact that
\[
\sup_{y \in B(x_0,1)}
G^V_{B(x_0,1)}\mathbf{1}(y) = C^{(1)} <\infty
\]
with $C^{(1)}$ independent of $V$ and $x_0$,
we have
\[
G^V_{B(x_0,1)}\mathbf{1}(y) \leq G^V_{B(x_0,5/8)}
\mathbf{1}(y) + C^{(1)} \mathbf{E}^y\bigl[e_V(
\tau _{B(x_0,5/8)})\bigr], \qquad|y-x_0|<\tfrac{1} 2.
\]
Let now
\[
g(y) = \cases{\displaystyle\mathbf{E}^y\bigl[e_V(
\tau_{B(x_0,5/8)})\bigr], &\quad if $y \in B(x_0,5/8)$,
\vspace*{2pt}\cr
\mathbf{1}, &\quad if $y \notin B(x_0,5/8)$.}
\]
By applying Lemma~\ref{lmbhiold} for $g$ with $r=1/2$, $p=17/32$,
$q=9/16$ and $R=5/8$, we conclude that
\begin{eqnarray}
\mathbf{E}^y\bigl[e_V(\tau_{B(x_0,5/8)})\bigr] \leq
C_{23} G^V_{B(x_0,17/32)}\mathbf{1}(y) \int
_{B(0,9/16)^c} \nu(z)\,dz \leq C G^V_{B(x_0,5/8)}
\mathbf{1}(y),\nonumber
\\
\eqntext{\displaystyle |y-x_0|< \tfrac{1} 2}
\end{eqnarray}
with constant $C=C(X)$, independent of $V$ and $x_0$.
\end{pf}

In fact, in order to obtain the above corollary it suffices to prove
Lemma~\ref{lmbhiold} only for two fixed
sets of parameters $r, p, q, R$. Therefore, it would be enough to have
Assumptions~\ref{assassnu}(1) and
\ref{assassbhi} in place only for some specially chosen, sufficiently
small $r>0$ and $p, q >0$, respectively.
However, this approach requires a detailed analysis of constants
appearing in \cite{bibBKK} and causes further
technical difficulties (note that the parameters $r, p, q$ in Lemma
\ref
{lmbhiold} do not correspond directly
to $r$ and $p, q$ in the assumptions). Since the
general Assumptions~\ref{assassnu}(1) and~\ref{assassbhi} are not
restrictive for our further results, we
included a general version of Lemma~\ref{lmbhiold}.

The following auxiliary results will also be used later.

\begin{lemma}
\label{lmtechiu}
Let $D \subset\mathbf{R}^d$ be an arbitrary open set and $V$ be an $X$-Kato
class potential such that $V \geq0$
on $D$. Then there are constants $C_{25}=C_{25}(X,V,t)$ and
$C_{26}=C_{26}(X,V,t)$ such that for every $t>0$
we have
\begin{longlist}[(2)]
\item[(1)]
$\mathbf{E}^x [\frac{t}{2} \geq\tau_D; e_V(t) ] \leq
C_{25}
\mathbf{E}^x [e_V(\tau_D)T_{t/2}\mathbf{1}(X_{\tau_D}) ]$;

\item[(2)]
$\mathbf{E}^x [\frac{t}{2}<\tau_D; e_V(t) ] \leq C_{26}
G_D^V\mathbf{1}(x)   \sup_{y \in D}T_{t/2}\mathbf{1}(y)$, $x \in D$.
\end{longlist}
\end{lemma}

\begin{pf}
The proof of (1) and (2) with the expression on the right-hand side
$G_D^V\mathbf{1}(x)  \sup_{y \in D}T_{t/2}\mathbf{1}(y)$ replaced
by $\mathbf{E}^x [\frac
{t}{4} <
\tau_D; e_V (\frac{t}{4} ) ]  \sup_{y \in
D}T_{3t/4}\mathbf{1}
(y)$ runs in the same way as in \cite{bibKL}, Lemma 4.3.
We complete the proof of (2) by the simple observation that
\begin{eqnarray}
\mathbf{E}^x \biggl[\frac{t}{4} < \tau_D;
e_V \biggl(\frac
{t}{4} \biggr) \biggr] \leq
\frac{4} t \mathbf{E}^x \biggl[\frac{t}{4} <
\tau_D; \int_0^{t/4}
e^{-\int
_0^v V(X_s)\,ds}\,dv \biggr] \leq\frac{4} t G_D^V
\mathbf{1}(x),\nonumber
\\
\eqntext{x \in D}
\end{eqnarray}
and
\begin{eqnarray*}
T_{3t/4}\mathbf{1}(y) &=& \mathbf{E}^y \biggl[e_V
\biggl(\frac
{t}{2} \biggr)\mathbf{E}^{X_{t/2}} \biggl[
e_V \biggl(\frac{t}{4} \biggr) \biggr] \biggr]
\\
&\leq&
T_{t/2}(y) \sup_{z \in\mathbf{R}^d} \mathbf{E}^z
\biggl[e_V \biggl(\frac
{t}{4} \biggr) \biggr]
\leq
C_{V,t} T_{t/2}(y), \qquad y \in D.
\end{eqnarray*}\upqed
\end{pf}

A short proof of the following fact was communicated to us by M. Kwa\'snicki.

\begin{lemma}
\label{lmcompden}
Let $(X_t)_{t \geq0}$ be a L\'evy process with transition densities
$p(t,x,y)=p(t,y-x)$ such that for some $t>0$ and
all $s \in(0,t]$ we have $p(s,x) \leq p(s,y)$ whenever $|x| \geq|y|$.
Then for every bounded open set
$D \subset\mathbf{R}^d$ and $r>0$ such that $ \{y \in D\dvtx
\operatorname{dist}(y,\partial
D) \geq r \} \neq\varnothing$
there is a constant $C_{27}=C_{27}(r)$ such that
\[
p_D(t,x,y) \geq p(t,y-x) - C_{27}, \qquad x, y \in D,
 \operatorname{dist}(y, \partial D) \geq r.
\]
\end{lemma}

\begin{pf}
Observe that for every $|z| \geq r$ and $s \in(0,t]$ we have
\[
\bigl|B(0,r)\bigr| p(s,z) \leq\int_{B(0,r)} p(s,w)\,dw \leq1.
\]
Thus, by Hunt's formula (\ref{eqHuntF}) we get
\begin{eqnarray*}
p_D(t,x,y) &=& p(t,y-x) - \mathbf{E}^x \bigl[
\tau_D < t; p(t-\tau_D, y-X_{\tau
_D}) \bigr]
\\
&\geq& p(t,y-x) - C_{27}
\end{eqnarray*}
for all $x, y \in D$ such that $\operatorname{dist}(y, \partial D)
\geq r$, with
$C_{27} = |B(0,r)|^{-1}$.
\end{pf}

\subsection{Jump estimates}\label{sec3.2}
\label{subsecjumpest}
For our purposes below, we will need to control jumps between some
carefully chosen regions.
Let $n, k \in\mathbf{N}$, $n, k \geq n_0 \geq2$ (with $n_0$ to be chosen
below), and define
\begin{eqnarray*}
D_n &:=& \bigl\{x \in\mathbf{R}^d\dvtx  n-2 < |x| \bigr\},
\qquad n \geq n_0+2,
\\
D_{n_0} &=& D_{n_0+1}:= \mathbf{R}^d,
\\
R_k &:=& \bigl\{x \in\mathbf{R}^d\dvtx  k-1 < |x| \leq k
\bigr\}, \qquad k \geq n_0+2,
\\
R_{n_0} &:=& \bigl\{x \in\mathbf{R}^d\dvtx  |x| \leq
n_0 \bigr\},
\\
R_{n_0+1}&:=& \bigl\{x \in\mathbf{R}^d\dvtx
|x| \leq n_0+1 \bigr\}.
\end{eqnarray*}
We will use the two stopping times
\begin{eqnarray*}
\tau_n &=&\tau_{D_n}:= \inf \{t \geq0\dvtx  X_t
\notin D_n \},
\\
\sigma_k &=&\sigma_{R_k}:=\inf \{t \geq0\dvtx
X_t \in R_k \}.
\end{eqnarray*}
Note that $\tau_{n_0} = \tau_{n_0+1} = \infty$. In the events in which
we are interested, the process
jumps from the complement of a ball $D_n$ to a smaller shell $R_k$,
which we will refer to as admissible
jumps. We define for $k \geq n_0$, $n \geq k+2$ and $t>0$ the events
\begin{eqnarray*}
 S(n,k,1,t)&=& \{X_{\tau_n} \in R_k, \sigma_k,
< t \},
\\
S(n,k,l,t)&=& \bigcup_{p=k+2}^{n-2}
S(n,p,l-1,t) \cap S(p,k,1,t), \qquad l >1.
\end{eqnarray*}
The first corresponds to the event that the process arrives in shell
$R_k$ before time $t$ in just
one jump after exiting $D_n$. The second event is defined inductively.
Let $k+2 \leq p \leq n-2$.
The event $S(n,p,1,t) \cap S(p,k,1,t)$ means that the process jumps to
shell $R_p$ on leaving $D_n$
and then again jumps to shell $R_k$ on leaving~$D_p$, and all this
occurs before time $t$. Note that
the process may go elsewhere after arriving in $R_p$ but the events
which we are constructing only
take account of admissible jumps, that is, those that are oriented to
the origin through jumps into the
shells~$R_k$. Thus, the event $S(n,k,l,t)$ corresponds to the process
arriving in shell $R_k$ from $D_n$
through $l$ admissible jumps before time $t$. This scheme of keeping
track of the so defined jumps has
been first devised in \cite{bibBK} and used in \cite{bibKS}. Here, we
also partially
adopt the notation of \cite{bibKS}.

The following technical lemma will be needed below.

\begin{lemma}
\label{lmlevykernel}
Let Assumption~\ref{assassnu}(1)--(2) be satisfied, and take $n, k
\in\mathbf{N}$ such that $n-2 > k \geq n_0$.
Then the following hold:
\begin{longlist}[(2)]
\item[(1)] There is a constant $C_{28}=C_{28}(X) \geq1$ (independent
of $n$ and $k$) such that
\[
\int_{R_k} \nu(z-y)\,dz \leq C_{28} \int
_{R_k} \nu(z-x)\,dz, \qquad x \in R_n, y \in
D_n.
\]
\item[(2)] For any $m \in\mathbf{N}$, there is a constant
$C_{29}=C_{29}(X, m)
\geq1$ (independent of $k$) such that
\[
\int_{R_{k+m} \cap \{z\dvtx   |y-z|>1/2 \}} \nu(z-y)\,dz \leq C_{29} \int
_{R_k} \nu(z-y)\,dz, \qquad |y| \geq k+1.
\]
\end{longlist}
\end{lemma}

\begin{pf}
First, we prove (1). By rotation symmetry of $R_k$ and conditions
\mbox{(1)--(2)} of Assumption~\ref{assassnu},
we deduce directly that
\[
\int_{R_k} \nu(z-y)\,dz \leq C^2_1
\int_{R_k} \nu(z-x)\,dz, \qquad x \in R_n, n-2 <
|y| \leq|x|
\]
and
\[
\int_{R_k} \nu(z-y)\,dz \leq C_2 \int
_{R_k} \nu(z-x)\,dz, \qquad x \in R_n, |x|
\leq|y|,
\]
respectively. Thus, (1) follows. Consider assertion (2) of the lemma.
Define the dilations $S_{k,m}(w)
= ((k+m-1)/(k-1))w$, $m \in\mathbf{N}$. Since $R_{k+m} \subset S_{k,m}(R_k)$,
it suffices to prove (2) for
$R_{k+m}$ replaced by $S_{k,m}(R_k)$. By changing variables, we obtain
\begin{eqnarray*}
&& \int_{S_{k,m}(R_k) \cap \{z\dvtx   |y-z|>1/2 \}}  \nu(z-y)\,dz
\\
&&\qquad = \biggl(\frac{k+m-1}{k-1} \biggr)^d \int_{R_k \cap S^{-1}_{k,m}(
\{z\dvtx
|y-z|>1/2 \})}
\nu \biggl(\frac{k+m-1}{k-1}w-y \biggr)\,dw
\end{eqnarray*}
for all $|y| \geq k+1$. Since $\llvert \frac{k+m-1}{k-1}w - w\rrvert
\leq
2m$, by Assumption
\ref{assassnu}(1) we have
\[
\nu \biggl(\frac{k+m-1}{k-1}w-y \biggr) \leq C^{(1)} \nu (w-y )
\]
with $w \in R_k \cap S^{-1}_{k,m}( \{z\dvtx   |y-z|>1/2 \})$ and
constant $C^{(1)} = C^{(1)}(X,m)$.
Hence,
\begin{eqnarray}
\int_{S_{k,m}(R_k) \cap \{z\dvtx   |y-z|>1/2 \}}  \nu(z-y)\,dz \leq C^{(1)}
(m+1)^d \int_{R_k} \nu (w-y )\,dw,\nonumber
\\
\eqntext{|y| \geq k+1,}
\end{eqnarray}
which completes the proof of the lemma.
\end{pf}

The next two lemmas are key tools to our further considerations. Lemma
\ref{lmjumpest1}
builds on \cite{bibKS}, Lemma 5.7,
however, our argument is based on a completely new approach which
combines sharp uniform upper estimates for local maxima of
$(X,V)$-harmonic functions (Corollary~\ref{lmbhi})
with an inductive procedure which substantially uses Assumption~\ref{assassnu}(3). We recall that a sufficiently
general version of this uniform estimate necessary for our purposes in
this paper was proved only recently in
\cite{bibBKK}. The second Lemma~\ref{lmjumpest2} is a
corollary of Lemma~\ref{lmjumpest1} and Assumption~\ref{assassnu}.
Notice that it is crucial for the applications
below that the constants $C_{30}$ and $C_{31}$ in these lemmas are
independent of $t$, unlike in \cite{bibKS}. This
allows us to use them in proving estimates of $\lambda$-subaveraging
functions and, in consequence, the bounds on
the eigenfunctions. Both proofs below clearly show the significance of
condition (3) in Assumption~\ref{assassnu}.

We note for later use that under condition (1) in Assumption~\ref{assassnu} we have $\mathbf{P}^x(\tau_n < t) > 0$ and
$\mathbf{P}^x(\sigma_k < t) > 0$, for all $n-2 \geq k \geq n_0$,
$n-1<|x|\leq n$
and~$t>0$.

\begin{lemma}
\label{lmjumpest1}
Let Assumptions~\ref{assassnu}--\ref{assassbhi} hold, and $n, k \in
\mathbf{N}
$ be such that $n-2\geq k
\geq n_0$. Then there is a constant $C_{30}=C_{30}(X)$ and $\theta
_0=\theta_0(X) \geq1$ such that for every $t>0$,
for all $n-1 < |x| \leq n$ and $\theta>\theta_0$ we have
\[
\mathbf{E}^x \bigl[\tau_n < t, X_{\tau_n} \in
R_k; e^{-\theta\tau
_n} \bigr] \leq\frac{C_{30}}{\theta} \int
_{R_k} \nu(y-x)\,dy.
\]
\end{lemma}

\begin{pf}
First, we assume that $n> k+ 2$ and $\theta>0$ is arbitrary. Note that
$\operatorname{dist}(D_n, R_k) \geq1$.
For $r>n-2$ denote $\tau_{n,r}:=\tau_{D_n \cap B(0,r)}$. Using the
Ikeda--Watanabe formula
\cite{bibIW}, Theorem 1, we have
\begin{eqnarray*}
&& \mathbf{E}^x \bigl[\tau_{n,r} < t, X_{\tau_{n,r}} \in
R_k; e^{-\theta\tau
_{n,r}} \bigr]
\\
&&\qquad \leq\int_{D_n \cap B(0,r)}  \int_0^{\infty}
e^{-\theta s} p_{D_n
\cap
B(0,r)}(s,x,y) \int_{R_k}
\nu(z-y)\,dz\,ds\,dy
\end{eqnarray*}
and, consequently, by Lemma~\ref{lmlevykernel}(1) and Fubini's
theorem we get
\begin{eqnarray*}
&& \mathbf{E}^x \bigl[\tau_{n,r} < t, X_{\tau_{n,r}} \in
R_k; e^{-\theta\tau
_{n,r}} \bigr]
\\
&&\qquad \leq C \int_0^{\infty}
e^{-\theta s} \,ds \int_{R_k} \nu(z-x)\,dz
= \frac {C}{\theta
} \int_{R_k} \nu(z-x)\,dz
\end{eqnarray*}
with constant $C=C(X)$. To complete the proof in this case, it suffices
to show that
\[
\mathbf{E}^x \bigl[\tau_{n,r} < t, X_{\tau_{n,r}} \in
R_k; e^{-\theta\tau
_{n,r}} \bigr] \to \mathbf{E}^x \bigl[
\tau_{n} < t, X_{\tau_{n}} \in R_k;
e^{-\theta
\tau_{n}} \bigr]
\]
as $r \to\infty$. Since $\tau_{n,r} = \tau_{n}$ when $X_{\tau_{n,r}}
\in R_k$, we have
\begin{eqnarray*}
0 &\leq&\mathbf{E}^x \bigl[\tau_{n} < t, X_{\tau_{n}}
\in R_k; e^{-\theta\tau_{n}} \bigr] - \mathbf{E}^x \bigl[
\tau_{n,r} < t, X_{\tau_{n,r}} \in R_k;
e^{-\theta\tau
_{n,r}} \bigr]
\\
& =& \mathbf{E}^x \bigl[\tau_{n,r} < \tau_{n} <
t, X_{\tau_{n}} \in R_k, X_{\tau
_{n,r}} \in
B(0,r)^c; e^{-\theta\tau_{n}} \bigr]
\\
& \leq&\mathbf{P}^x \bigl(\tau_{n,r} < t, X_{\tau_{n,r}}
\in B(0,r)^c \bigr)
\\
& \leq&\mathbf{P}^x (\tau_{B(0,r)} < t ) \leq\mathbf
{P}^0(\tau_{B(0,r/2)} < t)
\end{eqnarray*}
for sufficiently large $r$. Clearly, $\mathbf{P}^0(\tau_{B(0,r/2)} <
t) \to0$
as $r \to\infty$, $t>0$,
and the claimed convergence follows.

Now consider the case $n=k+2$. Denote $B_y=B(y,1)$ and
\[
f(y) = \cases{ \displaystyle \mathbf{E}^y \bigl[
\tau_n < \infty, X_{\tau_n} \in R_k;
e^{-\theta\tau_n} \bigr], &\quad if $y \in D_n$,
\vspace*{2pt}\cr
\mathbf{1}_{R_k}(y), &\quad if $y \notin D_n$.}
\]
%
Since $f$ is an $(X,\theta)$-regular harmonic function in $D_n$, we have
\[
f(z) = \mathbf{E}^z \bigl[e^{-\theta\tau_{B_y}} f(X_{\tau
_{B_y}})
\bigr], \qquad z \in B_y, |y|>n-1.
\]
We will show that there is a constant $C=C(X)$ and $\theta_0\geq1$ such
that for every $\theta\geq\theta_0$ and $l \in\mathbf{N}$
%
\begin{eqnarray}\label{eqauxest1}
f(y) \leq\frac{C}{\theta} \Biggl(\sum
_{i=1}^{l}\frac{1}{2^i} \int
_{R_k} \nu(z-y) \,dz + \frac{1}{2^l}\int
_{|z|>n+1} \nu(z-y)f(z)\,dz \Biggr),
\nonumber\\[-16pt]\\[-2pt]
\eqntext{n-1 < |y| \leq n.}
\end{eqnarray}
If this holds, then by taking the limit $l \to\infty$ the required
bound follows.

By Corollary~\ref{lmbhi}, we have
\begin{eqnarray}
f(z) \leq C_{24} G^{\theta}_{B_y}\mathbf{1}(z) \int
_{B(y,1/2)^c} f(w) \nu (w-y)\,dw,\nonumber
\\
\eqntext{|z-y|<1/2,  n-1 < |y|}
\end{eqnarray}
and, since $G^{\theta}_{B_y} \mathbf{1}(z) \leq1/\theta$, we obtain
%
\begin{eqnarray}\label{eqauxest0}
f(y) \leq\frac{2 C_{24}}{2 \theta} \biggl(\int_{R_k}
\nu(z-y)\,dz + \int_{{|z-y|>1/2,|z|>n-2}} \nu(z-y)f(z)\,dz \biggr),
\nonumber\\[-3pt]\\[-16pt]
\eqntext{n-1 <|y|.}
\end{eqnarray}
Moreover, $0 \leq f \leq1$ and a direct application of Lemma~\ref{lmlevykernel}(2)
to each of the three integrals (recall that $k=n-2$)
\[
\int_{R_{k+m} \cap \{z\dvtx   |z-y|>1/2 \} } \nu(z-y)\,dz, \qquad m = 1, 2, 3,
\]
separately gives that
%
\begin{eqnarray}\label{eqauxest2}
f(y) \leq\frac{C}{2 \theta} \biggl(\int_{R_k}
\nu(z-y)\,dz + \int_{{|z-y|>1/2,|z|>n+1}} \nu(z-y)f(z)\,dz \biggr),
\nonumber\\[-3pt]\\[-16pt]
\eqntext{n-1 <|y|}
\end{eqnarray}
with constant $C=C(X)$. In particular, (\ref{eqauxest1}) holds for
$l=1$ and arbitrary $\theta>0$.

Next, we use induction. Let $\theta_0:=C C_3 \vee1$ and suppose that
(\ref{eqauxest1}) is true for $l-1 \in\mathbf{N}$ with constant $C$ and
$\theta\geq\theta_0$, where $C$ is the constant in (\ref
{eqauxest2}). By the induction hypothesis and (\ref{eqauxest2}), we
have for $n-1 < |y| \leq n$ and $\theta\geq\theta_0$
\begin{eqnarray*}
f(y) & \leq&\frac{C}{\theta} \Biggl(\sum_{i=1}^{l-1}
\frac{1}{2^i} \int_{R_k} \nu(z-y) \,dz + \frac{1}{2^{l-1}}
\int_{|z|>n+1} \nu(z-y)f(z)\,dz \Biggr)
\\
& \leq& \frac{C}{\theta}\sum_{i=1}^{l-1}
\frac{1}{2^i} \int_{R_k} \nu(z-y) \,dz
\\
&&{} + \biggl(\frac{C}{\theta} \biggr)^2 \frac{1}{2^{l}}\int
_{|z|>n+1} \nu (z-y)\int_{R_k} \nu(w-z)\,dw\,dz
\\
&&{}+ \biggl(\frac{C}{\theta} \biggr)^2 \frac{1}{2^{l}}\int
_{|z|>n+1} \nu(z-y) \int_{{|w-z|>1/2,|w|>n+1}}
\nu(w-z)f(w)\,dw\,dz.
\end{eqnarray*}
An application of Fubini's theorem and Assumption~\ref{assassnu}(3)
to the last two terms in the
sum on the right-hand side above gives
\begin{eqnarray}
f(y) &\leq&\frac{C}{\theta} \sum_{i=1}^{l}
\frac{1}{2^i} \int_{R_k} \nu (w-y) \,dw +
\frac{C}{\theta} \frac{1}{2^{l}} \int_{{|w|>n+1}}
\nu(w-y)f(w)\,dw,\nonumber
\\[-2pt]
\eqntext{n-1 < |y| \leq n.}
\end{eqnarray}
\end{pf}

\begin{lemma}
\label{lmjumpest2}
Let Assumptions~\ref{assassnu}--\ref{assasspinning} hold. Moreover,
suppose that there is a
nondecreasing sequence $g_n \to\infty$ as $n \to\infty$ and $n_0
\in
\mathbf{N}$ large enough such that
for $n \geq n_0$ we have
\[
1< 2 \theta_0 \leq g_{n} \leq\inf_{|y|\geq n}
V(y) \quad\mbox{and} \quad 4 C_3 C_{28}
C_{30} \leq g_{n_0},
\]
where $C_{28}$, $C_{30}$ are constants, and $\theta_0$ is the parameter
from Lemmas~\ref{lmlevykernel}(1)
and~\ref{lmjumpest1}, respectively. Then for $n-1 < |x| \leq n$, $n_0
\leq k \leq n-2$, $n, k, l \in\mathbf{N}$,
it follows that
%
\begin{eqnarray}\label{eqjumpest2}
\mathbf{E}^x \bigl[S(n,k,l,t); e^{-(1/2) \int_0^{\sigma_k}
V(X_s)\,ds}
\bigr] \leq \frac{C_{31}}{2^l g_{n-2}} \int_{R_k} \nu(y-x)\,dy
\nonumber\\[-9pt]\\[-9pt]
\eqntext{\mbox{with } C_{31} = 4 C_{30}.}
\end{eqnarray}
\end{lemma}

\begin{pf}
We use induction in $l \in\mathbf{N}$. Let $l=1$. Since we have $S(n,k,1,t) =
\{
X_{\tau_n} \in R_k, \sigma_k <t  \}$
and $\tau_n = \sigma_k$ for $X_{\tau_n} \in R_k$, we obtain by Lemma
\ref{lmjumpest1}
\begin{eqnarray*}
\mathbf{E}^x \bigl[S(n,k,1,t); e^{-(1/2) \int_0^{\sigma_k}
V(X_s)\,ds} \bigr] & \leq&
\mathbf{E}^x \bigl[X_{\tau_n} \in R_k,
\tau_n < t; e^{-\tau_n g_{n-2}/2 } \bigr]
\\
& \leq&\frac
{C_{31}}{2g_{n-2}} \int_{R_k} \nu(y-x)\,dy.
\end{eqnarray*}

Let now $l \geq2$ and suppose that the bound (\ref{eqjumpest2}) holds
for $1, 2, \ldots, l-1$ and all $n, k$ as in the statement of the lemma.
The strong Markov
property gives
\begin{eqnarray*}
&& \mathbf{E}^x  \bigl[S(n,k,l,t); e^{-(1/2) \int_0^{\sigma_k} V(X_s)\,ds} \bigr]
\\
&&\qquad = \sum_{p = k+2}^{n-2} \mathbf{E}^x
\bigl[S(n,p,l-1,t),S(p,k,1,t);
\\
&&\hspace*{78pt} e^{-(1/2)
\int_0^{\sigma_p} V(X_s)\,ds} e^{-(1/2) \int_{\sigma_p}^{\sigma_k} V(X_s)\,ds} \bigr]
\\
&&\qquad \leq \sum_{p = k+2}^{n-2}
\mathbf{E}^x \bigl[S(n,p,l-1,t),S(p,k,1,t+{\sigma_p});
\\
&&\hspace*{78pt}
e^{-(1/2) \int_0^{\sigma_p} V(X_s)\,ds} e^{-(1/2) \int_{\sigma_p}^{\sigma_k} V(X_s)\,ds} \bigr]
\\
&&\qquad = \sum_{p = k+2}^{n-2} \mathbf{E}^x
\bigl[S(n,p,l-1,t);
\\
&&\hspace*{78pt} e^{-(1/2)
\int
_0^{\sigma_p}V(X_s)\,ds} \mathbf{E}^{X_{\sigma_p}} \bigl[S(p,k,1,t);
e^{-(1/2) \int
_0^{\sigma_k}
V(X_s)\,ds} \bigr] \bigr].
\end{eqnarray*}
By the induction hypothesis and Lemma~\ref{lmlevykernel}(1), the last
sum is bounded above by
\[
\sum_{p = k+2}^{n-2} \frac{C_{31}}{2^{l-1} g_{n-2}} \int
_{R_{p}} \nu(y-x) \frac{C_{31}C_{28}}{2g_{p-2}} \int_{R_k}
\nu(z-y)\,dz\,dy.
\]
Hence, Fubini's theorem and Assumption~\ref{assassnu}(3) yield that
\begin{eqnarray*}
&& \mathbf{E}^x \bigl[S(n,k,l,t); e^{-(1/2) \int_0^{\sigma_k}
V(X_s)\,ds} \bigr]
\\
&&\qquad  \leq
\frac{C_{31}}{2^{l} g_{n-2}}\frac{C_{31}C_{28}}{g_{n_0}} \int_{R_k}\sum
_{p = k+2}^{n-2} \int_{R_{p}} \nu(y-x)
\nu(z-y) \,dy \,dz
\\
&&\qquad  \leq \frac{C_{31}}{2^{l} g_{n-2}}\frac{4 C_{30}C_{28} C_3}{g_{n_0}} \int_{R_k}
\nu(y-x) \,dy
\\
&&\qquad  \leq \frac{C_{31}}{2^{l} g_{n-2}} \int_{R_k} \nu(y-x) \,dy.
\end{eqnarray*}\upqed
\end{pf}

\subsection{Estimates of \texorpdfstring{$\lambda$}{$lambda$}-subaveraging functions}\label{sec3.3}
\mbox{}

\begin{pf*}{Proof of Theorem~\ref{thmdefic1}}
Recall that $C_{21}=C_{21}(X,V)$, $C_{22}=C_{22}(X,V)$ and
$C_{29}=C_{29}(X,m)$ are the constants in
(\ref{eqkhas}) and Lemma~\ref{lmlevykernel}(2), respectively. We write
\begin{eqnarray*}
C &=& C(X):= 2  \biggl(1 \vee\frac{C_2}{|B(0,1)| \nu((6,0,\ldots,0))}\biggr) \geq2,
\\
C^{(1)} &=& C^{(1)}(X):= \max_{1 \leq m \leq2} C_{29} \geq1,
\\
C^{(2)} &=& C^{(2)}(X):= \frac{1} 4  \biggl(1 \wedge\biggl(\int_{B(0,1)^c}\nu
(y)\,dy\biggr) ^{-1} \biggr) \leq\frac{1} 4.
\end{eqnarray*}

Notice\vspace*{1pt} that $C^{(2)} \int_{B(0,1)^c} \nu(y) \,dy \leq1/4$. For $n \in
\mathbf{N}
$ we denote $g_n:=\break\inf_{|y| \geq n}
V_0(y)$ with $V_0=V-\lambda$. Let $n_0$ be a natural number satisfying
the assumptions of Lemma
\ref{lmjumpest2} for the potential $V_0$ and the sequence
$(g_n)_{n\in\mathbb{N}}$,
and such that
%
\begin{eqnarray}
\label{eqset1}
&& \max \biggl(C_{31} \biggl( \biggl(\int
_{B(0,1)^c} \nu(y)\,dy \vee C_3 \biggr) +
\frac{3C_{21} C C_{28} C^{(1)}}{C^{(2)}} \biggr), 2(\lambda +C_{22}) \biggr)
\nonumber\\[-8pt]\\[-8pt]
&&\qquad \leq g_{n_0},\nonumber
\end{eqnarray}
where $C_{31}$ is the constant from Lemma~\ref{lmjumpest2}. It is
worth to note for later use that since $V_0(y) \leq2V(y)$ for $|y|
\geq n_0$, the number $n_0$ also satisfies the assumptions of Lemma
\ref
{lmjumpest2} for the potential $2 V$ and the same sequence
$(g_n)_{n\in\mathbb{N}}$.

We will show that
%
\begin{equation}
\label{eqvarphi0est1} \qquad\varphi(x) \leq C \llVert \varphi\rrVert _{\infty} \biggl(
\int_{R_{n_0}} \nu (y-x)\,dy \biggr)^{\sum_{i=1}^{p} 2^{-i}}\qquad\mbox{for }
|x| > n_0+3
\end{equation}
for all $p \in\mathbf{N}$. If this holds, then by taking the limit $p
\to
\infty$ and using Assumption~\ref{assassnu}(1) the theorem follows.

We use again induction. For more clarity, we divide the proof of (\ref
{eqvarphi0est1}) in two steps.
\begin{longlist}[\textit{Step} 2]
\item[\textit{Step} 1.] First, we show that the bound (\ref{eqvarphi0est1})
holds for $p=1$.
We have
\begin{eqnarray}
\varphi(x) \leq e^{\lambda t} \mathbf{E}^x \bigl[e^{-\int_0^t
V(X_s)\,ds} \varphi (X_t) \bigr] \leq \llVert \varphi\rrVert _{\infty}
\mathbf{E}^x \bigl[e^{-\int_0^t
V_0(X_s)\,ds} \bigr],\nonumber
\\
\eqntext{x \in \mathbf{R}^d, t>0.}
\end{eqnarray}
To get an upper bound on the latter expectation, let $n-1 < |x| \leq
n$, $n \geq n_0 + 4$. For all
$t >0$, we have
%
\begin{eqnarray}
\label{eqsubst1}
\qquad \mathbf{E}^x \bigl[e^{-\int_0^t V_0(X_s)\,ds} \bigr] & \leq&
\mathbf {E}^x \bigl[\tau_n > t; e^{-\int_0^t V_0(X_s)\,ds} \bigr]
\nonumber
\\[-8pt]
\\[-8pt]
&&{} + \sum_{k=n_0}^{n-2} \sum
_{l=1}^{\infty} \mathbf{E}^x
\bigl[S(n,k,l,t), \tau_k > t; e^{-\int_0^t V_0(X_s)\,ds} \bigr].
\nonumber
\end{eqnarray}
Clearly, by (\ref{eqset1}) the first term on the right-hand side is
estimated directly by
\[
\mathbf{P}^x (\tau_n > t) e^{-g_{n-2}t } \leq
\mathbf{P}^x (\tau_n > t) e^{-2(\lambda
+C_{22})t } \leq
e^{- (\lambda+C_{22})t }.
\]
Lemma~\ref{lmjumpest2} and (\ref{eqset1}) yield for $k \geq n_0+2$
\begin{eqnarray*}
&& \mathbf{E}^x \bigl[S(n,k,l,t), \tau_k > t;
e^{-\int_0^t
V_0(X_s)\,ds} \bigr]
\\
&&\qquad  \leq \mathbf{E}^x \bigl[S(n,k,l,t),
\tau_k > t; e^{-(1/2) \int
_0^{\sigma_k}V_0(X_s)\,ds} e^{-(1/2)\int_0^t V_0(X_s)\,ds} \bigr]
\\
&&\qquad  \leq e^{-(\lambda+C_{22})t} \mathbf{E}^x \bigl[S(n,k,l,t);
e^{-(1/2) \int
_0^{\sigma_k} V_0(X_s)\,ds} \bigr]
\\
&&\qquad \leq \frac{C_{31}}{g_{n_0}2^l} e^{- (\lambda+C_{22})t} \int_{R_k} \nu
(y-x) \,dy.
\end{eqnarray*}
Similarly, by the strong Markov property and (\ref{eqkhas}), we
have for $k=n_0$ and $k=n_0+1$ (recall $\tau_{n_0}=\tau_{n_0+1} =
\infty$)
\begin{eqnarray*}
&&  \mathbf{E}^x \bigl[S(n,k,l,t),
\tau_k > t; e^{-\int_0^t
V_0(X_s)\,ds} \bigr]
\\
&&\qquad \leq e^{\lambda t} \mathbf{E}^x \bigl[S(n,k,l,t);
e^{- \int_0^{\sigma_k}
V_{+}(X_s)\,ds}e^{\int_0^{t+\sigma_k} V_{-}(X_s)\,ds} \bigr]
\\
&&\qquad  =  e^{\lambda t} \mathbf{E}^x \bigl[S(n,k,l,t);
e^{- \int_0^{\sigma_k}
V(X_s)\,ds}e^{\int_{\sigma_k}^{t+\sigma_k} V_{-}(X_s)\,ds} \bigr]
\\
&&\qquad  =  e^{\lambda t} \mathbf{E}^x \bigl[S(n,k,l,t);
e^{- \int_0^{\sigma_k}
V(X_s)\,ds}\mathbf{E}^{X_{\sigma_k}} \bigl[e^{\int_0^t
V_{-}(X_s)\,ds} \bigr] \bigr]
\\
&&\qquad  =  e^{\lambda t} \sup_{y \in\mathbf{R}^d} \mathbf{E}^{y}
\bigl[e^{\int_0^t
V_{-}(X_s)\,ds} \bigr]\mathbf{E}^x \bigl[S(n,k,l,t);
e^{- \int
_0^{\sigma_k}
V(X_s)\,ds} \bigr]
\\
&&\qquad  \leq C_{21} e^{(\lambda+ C_{22})t} \mathbf{E}^x
\bigl[S(n,k,l,t); e^{-(1/2)
\int_0^{\sigma_k} 2V(X_s)\,ds} \bigr],
\end{eqnarray*}
which, in turn, by Lemmas~\ref{lmjumpest2}~and~\ref{lmlevykernel}(2), is smaller or equal to
\[
\frac{C_{21}C_{31}}{g_{n_0} 2^l} e^{(\lambda+ C_{22})t} \int_{R_k} \nu (y-x)\,dy
\leq \frac{C_{21}C_{31}C^{(1)}}{g_{n_0} 2^l} e^{(\lambda+ C_{22})t} \int_{R_{n_0}}
\nu(y-x)\,dy.
\]
By putting together the above estimates and choosing
\[
t= - \frac{1}{2(\lambda+ C_{22})} \log \biggl(C^{(2)}\int_{R_{n_0}}
\nu (y-x)\,dy \biggr) > 0
\]
in (\ref{eqsubst1}), and using (\ref{eqset1}) we conclude that
\begin{eqnarray*}
\varphi(x) & \leq&\llVert \varphi\rrVert _{\infty} \Biggl(
\biggl(C^{(2)}\int_{R_{n_0}}\nu(y-x)\,dy
\biggr)^{1/2}
\\
&&\hspace*{33pt}{}+ \frac{C_{31}}{g_{n_0}} \biggl(C^{(2)}\int_{R_{n_0}}
\nu(y-x)\,dy \biggr)^{1/2}\sum_{k=n_0+2}^{n-2}
\int_{R_k} \nu(y-x) \,dy
\\
&&\hspace*{114.5pt}{}+ \frac{2 C_{21}C_{31} C^{(1)}}{\sqrt{C^{(2)}}g_{n_0}} \biggl(\int_{R_{n_0}} \nu(y-x)\,dy
\biggr)^{1/2} \Biggr)
\\
& \leq& \llVert \varphi\rrVert _{\infty} \biggl(1 + \frac
{C_{31}}{g_{n_0}}
\biggl(\int_{B(0,1)^c} \nu(y)\,dy + \frac{2 C_{21} C^{(1)}}{\sqrt{C^{(2)}}} \biggr)
\biggr) \biggl(\int_{R_{n_0}}\nu(y-x)\,dy \biggr)^{1/2}
\\
& \leq& 2 \llVert \varphi\rrVert _{\infty} \biggl(\int_{R_{n_0}}
\nu (y-x)\,dy \biggr)^{1/2},
\end{eqnarray*}
which completes the first step.
\end{longlist}
\begin{longlist}[\textit{Step} 2.]
\item[\textit{Step} 2.] Suppose now that (\ref{eqvarphi0est1}) holds for $p$.
We prove that it is also
satisfied for $p+1$. We consider two cases. First let $n_0+3 < |x| \leq
n_0+4$. Denote $x_0 =
((n_0-1)/|x|)x$. Then by the fact that $1 \leq|y-x| \leq6$ for $y \in
B(x_0,1)$ and by Assumption
\ref{assassnu}(2), we have
\begin{eqnarray*}
\frac{\nu((6,0,\ldots,0)) |B(0,1)|}{C_2} &\leq&\int_{B(x_0,1)} \nu(y-x)\,dy
\\
&\leq&\int
_{R_{n_0}} \nu(y-x)\,dy.
\end{eqnarray*}
By using this estimate and the definition of $C$, it is immediate to
obtain the bound
\[
\varphi(x) \leq\llVert \varphi\rrVert _{\infty} \leq C \llVert \varphi
\rrVert _{\infty} \biggl(\int_{R_{n_0}}\nu(y-x)\,dy
\biggr)^{\sum_{i=1}^{p+1} 2^{-i}}.
\]
Let now $n-1 < |x| \leq n$, $n \geq n_0 + 5$. Similarly as before, for
all $t >0$ we have
%
\begin{eqnarray}
\label{eqsubst2} \varphi(x) &\leq& \mathbf{E}^x \bigl[
\tau_n > t; e^{-\int_0^t V_0(X_s)\,ds}\varphi(X_t) \bigr]
\nonumber
\\[-8pt]
\\[-8pt]
&&{} + \sum_{k=n_0}^{n-2} \sum
_{l=1}^{\infty} \mathbf{E}^x
\bigl[S(n,k,l,t), \tau _k > t; e^{-\int_0^t V_0(X_s)\,ds}\varphi(X_t)
\bigr].
\nonumber
\end{eqnarray}
By (\ref{eqvarphi0est1}), (\ref{eqset1}) and Lemma~\ref{lmlevykernel}(1), we find the following bound for the first
expectation in (\ref{eqsubst2})
%
\begin{eqnarray}\label{eqexpr1}
&& \mathbf{E}^x \bigl[\tau_n > t;
e^{-\int_0^t V_0(X_s)\,ds}\varphi (X_t) \bigr]\nonumber
\\
&&\qquad \leq e^{-2(\lambda+C_{22})t } \sup
_{|z| > n-2} \varphi(z)
\nonumber\\[-8pt]\\[-8pt]
&&\qquad  \leq e^{-(\lambda+C_{22})t} C \llVert \varphi\rrVert _{\infty} \sup
_{|z| > n-2} \biggl(\int_{R_{n_0}} \nu(y-z)\,dy
\biggr)^{\sum_{i=1}^{p} 2^{-i}}\nonumber
\\
&&\qquad \leq e^{-(\lambda+C_{22})t} C C_{28} \llVert \varphi\rrVert
_{\infty} \biggl(\int_{R_{n_0}} \nu(y-x)\,dy
\biggr)^{\sum_{i=1}^{p}
2^{-i}}.
\nonumber
\end{eqnarray}
We now estimate the expectations under the double sum on the right-hand
side of~(\ref{eqsubst2}).
By using (\ref{eqvarphi0est1}), (\ref{eqset1}) and the equality
$\sum_{i=1}^{p} 2^{-i} = 1-2^{-p}$,
we get for $k \geq n_0+3$
%
\begin{eqnarray}
\label{eqexpr2} && \mathbf{E}^x \bigl[S(n,k,l,t), \tau_k
> t; e^{-\int_0^t V_0(X_s)\,ds}\varphi (X_t) \bigr]
\nonumber
\\
&&\qquad\leq \sup_{|z| > k-2} \varphi(z) \mathbf{E}^x
\bigl[S(n,k,l,t), \tau_k > t;\nonumber
\\
&&\hspace*{101pt} e^{-(1/2) \int_0^{\sigma_k} V_0(X_s)\,ds}e^{-(1/2)\int_0^t
V_0(X_s)\,ds} \bigr]
\\
&&\qquad\leq C e^{-(\lambda+C_{22})t} \llVert \varphi\rrVert _{\infty} \sup
_{|z| > k-2} \biggl(\int_{R_{n_0}} \nu(y-z)\,dy
\biggr)^{1-2^{-p}}\nonumber
\\
&&\quad\qquad{}\times \mathbf{E}^x \bigl[S(n,k,l,t);
e^{-(1/2) \int_0^{\sigma_k} V_0(X_s)\,ds} \bigr].\nonumber
\end{eqnarray}
By Lemmas~\ref{lmlevykernel}(1) and~\ref{lmjumpest2}, the
right-hand side of (\ref{eqexpr2}) is
less or equal to
\[
\frac{C C_{31}}{g_{n_0}2^l} e^{-(\lambda+C_{22})t} \llVert \varphi \rrVert _{\infty}
\biggl(C_{28}\inf_{z \in R_k}\int_{R_{n_0}}
\nu(y-z)\,dy \biggr)^{1-2^{-p}} \int_{R_k} \nu(z-x) \,dz.
\]
Furthermore, by Fubini's theorem,
\begin{eqnarray*}
&& \inf_{z \in R_k} \int_{R_{n_0}} \nu(y-z)\,dy \int
_{R_k} \nu(z-x) \,dz
\\
&&\qquad \leq\int_{R_{n_0}}\int
_{R_k} \nu(y-z) \nu(z-x) \,dz \,dy
\end{eqnarray*}
and again by Lemma~\ref{lmlevykernel}(1),
\[
\biggl(C_{28}\inf_{z \in R_k}\int_{R_{n_0}}
\nu(y-z)\,dy \biggr)^{-2^{-p}} \leq \biggl(\int_{R_{n_0}}
\nu(y-x)\,dy \biggr)^{-2^{-p}}.
\]
Thus, the expectations on the left-hand side of (\ref{eqexpr2}) for $k
\geq n_0+3$ are bounded above by
%
\begin{eqnarray}\label{eqexpr3}
&& \frac{C C_{31}C_{28}}{g_{n_0}2^l} \llVert \varphi\rrVert _{\infty}
e^{- (\lambda+C_{22})t} \biggl(\int_{R_{n_0}} \nu(y-x)\,dy
\biggr)^{-2^{-p}}
\nonumber\\[-8pt]\\[-8pt]
&&\qquad{} \times\int_{R_{n_0}}\!\int_{R_k} \nu(y-z) \nu(z-x) \,dz \,dy.\nonumber
\end{eqnarray}
Similarly, by the strong Markov property, Lemmas~\ref{lmjumpest2} and
\ref{lmlevykernel}(2), we
estimate the expectations for $n_0 \leq k \leq n_0+2$ to obtain
%
\begin{eqnarray}
\label{eqexpr4} && \mathbf{E}^x \bigl[ S(n,k,l,t),
\tau_k > t; e^{-\int_0^t
V_0(X_s)\,ds}\varphi(X_t) \bigr]
\nonumber
\\
&&\qquad\leq \llVert \varphi\rrVert _{\infty}e^{\lambda t}
\mathbf{E}^x \bigl[S(n,k,l,t); e^{- \int_0^{\sigma_k} V_{+}(X_s)\,ds}e^{\int_0^{t+\sigma_k}
V_{-}(X_s)\,ds}
\bigr]
\nonumber
\\
&&\qquad = \llVert \varphi\rrVert _{\infty} e^{\lambda t}
\mathbf{E}^x \bigl[S(n,k,l,t); e^{- \int_0^{\sigma_k} V(X_s)\,ds}e^{\int_{\sigma_k}^{t+\sigma_k}
V_{-}(X_s)\,ds}
\bigr]
\nonumber
\\
&&\qquad= \llVert \varphi\rrVert _{\infty} e^{\lambda t}
\mathbf{E}^x \bigl[S(n,k,l,t); e^{- \int_0^{\sigma_k} V(X_s)\,ds}\mathbf{E}^{X_{\sigma_k}}
\bigl[e^{\int_0^t
V_{-}(X_s)\,ds} \bigr] \bigr]
\\
&&\qquad\leq C_{21} \llVert \varphi\rrVert _{\infty}
e^{(\lambda+ C_{22})t} \mathbf{E}^x \bigl[S(n,k,l,t);e^{-(1/2) \int_0^{\sigma_k}
2V(X_s)\,ds}
\bigr]
\nonumber
\\
&&\qquad\leq \frac{C_{21}C_{31}}{g_{n_0} 2^l} \llVert \varphi\rrVert _{\infty}
e^{(\lambda+ C_{22})t} \int_{R_k} \nu(y-x)\,dy
\nonumber
\\
&&\qquad\leq \frac{C_{21}C_{31}C^{(1)}}{g_{n_0} 2^l} \llVert \varphi\rrVert _{\infty}
e^{(\lambda+ C_{22})t} \int_{R_{n_0}} \nu(y-x)\,dy.
\nonumber
\end{eqnarray}
Combining the estimates (\ref{eqexpr1})--(\ref{eqexpr4}) and choosing
\[
t= -\frac{1}{\lambda+ C_{22}} \biggl(\log(C_{28} C)^{-1} +
2^{-(p+1)} \log \biggl(C^{(2)}\int_{R_{n_0}}
\nu(y-x)\,dy \biggr) \biggr)>0
\]
in (\ref{eqsubst2}), we obtain
\begin{eqnarray*}
\varphi(x) & \leq& \llVert \varphi\rrVert _{\infty} \Biggl( \biggl(\int
_{R_{n_0}} \nu (y-x)\,dy \biggr)^{\sum_{i=1}^{p+1} 2^{-i}}
\\
&&\hspace*{32pt}{} + \frac{C_{31}}{g_{n_0}} \biggl(\int_{R_{n_0}} \nu(z-x)\,dz
\biggr)^{2^{-(p+1)}-2^{-p}}
\\
&&\hspace*{42pt}{}\times \int_{R_{n_0}} \Biggl(\sum
_{k=n_0+3}^{n-2} \int_{R_k} \nu(y-z)\nu(y-x) \,dy \Biggr) \,dz
\\
&&\hspace*{32pt}{}+ \frac{3C_{21}C_{31}C C_{28} C^{(1)}}{C^{(2)}g_{n_0}} \biggl(\int_{R_{n_0}} \nu(y-x)\,dy
\biggr)^{\sum_{i=1}^{p+1} 2^{-i}} \Biggr).
\end{eqnarray*}
Finally, by using Assumption~\ref{assassnu}(3) and (\ref{eqset1}),
we get
\begin{eqnarray*}
\varphi(x) & \leq& \llVert \varphi\rrVert _{\infty} \biggl(1 +
\frac
{C_{31}C_3}{g_{n_0}} + \frac{3C_{21}C_{31}C
C_{28} C^{(1)}}{C^{(2)}g_{n_0}} \biggr) \biggl(\int_{R_{n_0}}
\nu (y-x)\,dy \biggr)^{\sum_{i=1}^{p+1} 2^{-i}}
\\
& \leq& 2 \llVert \varphi\rrVert _{\infty} \biggl(\int_{R_{n_0}}
\nu (y-x)\,dy \biggr)^{\sum_{i=1}^{p+1} 2^{-i}},
\end{eqnarray*}
which completes the proof of the theorem.\quad\qed
\end{longlist}\noqed
\end{pf*}

\begin{pf*}{Proof of Theorem~\ref{thmdefic2}}
By integrating in the equality $e^{-\lambda t}\varphi(x) = \mathbf
{E}^x
[e^{-\int_0^t V(X_s)\,ds}\varphi(X_t) ]$
over $t \in(0,\infty)$, it follows that
\[
\varphi(x) = \lambda G^V\varphi(x), \qquad x \in
\mathbf{R}^d,
\]
and by (\ref{eqpot1}) applied to $f=\varphi$, $D' = \mathbf{R}^d$,
$D=B(x,1)$, we obtain
%
\begin{equation}
\label{eqv0} \varphi(x) = \lambda G^V_D \varphi(x) +
\mathbf{E}^x \bigl[e^{-\int_0^{\tau_D}V(X_s)\,ds}\varphi(X_{\tau
_D})
\bigr], \qquad x \in\mathbf{R}^d.
\end{equation}

We now prove part (1) of the statement. Let $R>2$ be large enough so
that $V(x) \geq0$ for $|x| \geq R-1$ and
the assertion of Theorem~\ref{thmdefic1} for the $\lambda
$-subaveraging function $|\varphi|$ holds. Let $|x|
\geq R+2$. By (\ref{eqv0}), we have
\begin{eqnarray*}
\bigl|\varphi(x)\bigr| & \leq&\lambda G^V_D \bigl|\varphi(x)\bigr| +
\mathbf{E}^x \bigl[e_V(\tau_D)\bigl|
\varphi(X_{\tau_D})\bigr| \bigr] = {\mathrm{I}} + {\mathrm{II}}.
\end{eqnarray*}
By Theorem~\ref{thmdefic1} applied to $|\varphi|$, we have
\[
{\mathrm{I}} \leq C G^V_D \mathbf{1}(x) \sup
_{y \in D} \bigl|\varphi(y)\bigr| \leq C \llVert \varphi\rrVert
_{\infty} G^V_D \mathbf{1}(x) \nu(x)
\]
with $C=C(X,V,\lambda)$. To estimate ${\mathrm{II}}$ first note that by
Theorem~\ref{thmdefic1}, Assumption~\ref{assassnu}(1), (3), the Ikeda--Watanabe formula and the fact that
$\sup_{z \in D}
\mathbf{E}^z [\tau_D] \leq\mathbf{E}^0 [\tau_{B(0,2)}] < \infty$,
we have for $z \in
D \setminus B(x,3/4)$ the estimates
%
\begin{eqnarray}
\label{eqaux1}
&& \mathbf{E}^z \bigl[e_V(\tau_D)\bigl|\varphi(X_{\tau_D})\bigr| \bigr]\hspace*{-30pt}\nonumber
\\
&&\qquad  \leq  \mathbf{E}^z \bigl[\bigl|\varphi(X_{\tau_D})\bigr|; X_{\tau_D}
\in B(x,2)\setminus D \bigr] + \mathbf{E}^z \bigl[\bigl|
\varphi(X_{\tau _D})\bigr|; X_{\tau_D} \in B(x,2)^c \bigr]\hspace*{-30pt}\nonumber
\\
&&\qquad \leq C \biggl(\llVert \varphi\rrVert _{\infty} \nu(x) + \int
_D G_D(z,y) \int_{B(x,2)^c} \bigl| \varphi(w)\bigr|\nu(w-y) \,dw \,dy \biggr)\hspace*{-30pt}\nonumber
\\
&&\qquad \leq C \biggl(\llVert \varphi\rrVert _{\infty} \nu(x) +
\mathbf{E}^z [\tau_D] \int_{B(x,2)^c} \bigl|
\varphi(w)\bigr|\nu(w-x) \,dz \biggr)\hspace*{-30pt}
\\
&&\qquad  \leq C \llVert \varphi\rrVert _{\infty} \biggl(\nu(x) + \int
_{B(0,R)^c\cap B(x,2)^c} \nu(w) \nu(w-x) \,dz\hspace*{-30pt}\nonumber
\\
&&\hspace*{151pt}{} + \int_{B(0,R)} \nu(w-x) \,dw \biggr)\hspace*{-30pt}\nonumber
\\
&&\qquad  \leq C \llVert \varphi\rrVert _{\infty} \nu(x)\hspace*{-30pt}\nonumber
\end{eqnarray}
with $C=C(X,V,\lambda)$. Thus, by using Corollary~\ref{lmbhi}, the
above estimate, Theorem~\ref{thmdefic1}
and Assumption~\ref{assassnu}(1), (3), we finally have
\begin{eqnarray*}
{\mathrm{II}} & \leq& C G^V_D\mathbf{1}(x) \biggl(\int
_{D \cap B(x,3/4)^c} \mathbf{E}^z \bigl[e_V(\tau
_D)\bigl|\varphi(X_{\tau_D})\bigr| \bigr]\nu(z-x) \,dz
\\
&&\hspace*{143pt} {}+ \int_{D^c} \bigl|\varphi(z)\bigr|\nu(z-x) \,dz \biggr)
\\
& \leq& C \llVert \varphi\rrVert _{\infty} G^V_D
\mathbf{1}(x) \biggl( \nu (x) \int_{D
\cap B(x,3/4)^c} \nu(z-x) \,dz
\\
&& \hspace*{76pt} {}+ \int_{D^c \cap B(0,R)^c} \nu(z) \nu(z-x) \,dz
+ \int_{B(0,R)}\nu(z-x) \,dz\biggr)
\\
& \leq& C \llVert \varphi\rrVert _{\infty} G^V_D
\mathbf{1}(x) \nu(x),
\end{eqnarray*}
where $C=C(X,V,\lambda)$. We conclude that $|\varphi(x)| \leq C_5
\llVert \varphi\rrVert _{\infty} G^V_D \mathbf{1}(x) \nu(x)$ for all $|x|
\geq R+2$, with constant $C_5=C_5(X,V,\lambda)$.

Now consider part (2) of the statement. Again, by (\ref{eqv0}), strict
positivity of $\varphi$ and Corollary~\ref{lmbhi} we have
\begin{eqnarray}
\varphi(x) \geq\mathbf{E}^x \bigl[e^{-\int_0^{\tau
_D}V(X_s)\,ds} \varphi(X_{\tau
_D}) \bigr] \geq C^{-1}_{24}
G^V_{B(x,1)}\mathbf{1}(x) \int_{B(0,1)}
\varphi(z)\nu (x-z) \,dz,\nonumber
\\
\eqntext{|x| \geq R.}
\end{eqnarray}
By Assumption~\ref{assassnu}(1), the last integral is greater than $C
\nu(x) \int_{B(0,1)} \varphi(z) \,dz$
and the required inequality follows again from the positivity of
$\varphi$ with constant $C_6=C_6(X,\varphi)$.
\end{pf*}

\subsection{Eigenfunction estimates}\label{sec3.4}
\mbox{}

\begin{pf*}{Proof of Theorem~\ref{thmbsest}}
Let $\eta\geq0$ be such that $\lambda_0+\eta>0$ and let $n \geq0$
be fixed. We thus clearly have
$\varphi_n(x) = e^{\lambda t} \mathbf{E}^x[e_{V+\eta}(t)\varphi
_n(X_t)]$, $x
\in\mathbf{R}^d$, with $\lambda=
\lambda_n+\eta> \lambda_0+\eta>0$, and the result immediately follows
from Theorem~\ref{thmdefic2}(1)
for $\varphi= \varphi_n$.
\end{pf*}

\begin{pf*}{Proof of Theorem~\ref{thmgsest}}
Let $\eta\geq0$ be such that $\lambda_0+\eta>0$. The result directly
follows from Theorems~\ref{thmbsest}
and~\ref{thmdefic2}(2) for $\varphi= \varphi_0 > 0$ and $\lambda=
\lambda_0 + \eta> 0$.
\end{pf*}

\subsection{Intrinsic ultracontractivity}\label{sec3.5}
\mbox{}

\begin{pf*}{Proof of Theorem~\ref{thmiucvelgsd}}
First,\vspace*{1pt} we prove (1). By a standard argument based on the duality and
symmetry of $\widetilde T_t$, we have
$\|\widetilde T_t\|_{1 \rightarrow2} = \|\widetilde T_t \|_{2
\rightarrow\infty}$. Since
$\|\widetilde T_{t_0}\|_{2 \rightarrow\infty} < \infty$, by the
semigroup property it is straightforward
that also $\|\widetilde T_{2t_0}\|_{1 \rightarrow\infty} \leq\|
\widetilde T_{t_0}\|^2_{2 \rightarrow\infty}$.
Hence,
\[
\frac{e^{2 \lambda_0 t_0}}{\varphi_0(x)} T_{2t_0}\mathbf {1}(x)=\widetilde T_{2t_0}
\biggl(\frac{1}{\varphi_0} \biggr) (x) \leq C_{t_0},
\]
since by Theorem~\ref{thmbsest} we have $1/\varphi_0 \in L^1(\mathbf{R}
^d,\varphi_0^2\,dx)$. Hence, the implications
\mbox{$t_0$-}IUC${}\Rightarrow 2t_0$-GSD and IUC${}\Rightarrow{}$GSD follow. To
show that  $t_0$-GSD${}\Rightarrow 2t_0$-IUC ($t_0 \geq t_b$),
it suffices to observe that for all $f \in L^2(\mathbf{R}^d, \varphi
_0^2\,dx)$ we have
\begin{eqnarray*}
T_{2t_0}(f\varphi_0) (x) &=& T_{t_0}
T_{t_0}(f\varphi_0) (x)
\\
&\leq& T_{t_0} \mathbf{1}
(x) \|T_{t_0}\|_{2 \rightarrow\infty} \|f\varphi_0
\|_2
\\
&\leq& C_{t_0} \|f\varphi_0\|_2
\varphi_0(x).
\end{eqnarray*}
The last inequality follows from $\|T_{t_0}\|_{2 \rightarrow\infty
}<\infty$, coming from the boundedness of
$p(t_0,x)$ in $x \in\mathbf{R}^d$. Moreover, since GSD means $t$-GSD
for all
$t>0$, assertion~(2) of the theorem
follows again by the latter estimate.
\end{pf*}

\begin{lemma}
\label{lmequiv}
Let $V$ be a $X$-Kato class potential, nonnegative outside a bounded
subset of $\mathbf{R}^d$. Let Assumptions
\ref{assassnu}--\ref{assassbhi} be satisfied and consider the
following two conditions:
\begin{longlist}[(2)]
\item[(1)]
There exist $C_{32}=C_{32}(X,V,t)$ and $R >0$ such that
%
\begin{equation}
\label{eqcruest1} T_t \mathbf{1}(x) \leq C_{32} \nu(x),
\qquad|x| \geq R.
\end{equation}
\item[(2)]
There exist $C_{33}=C_{33}(X,V,t)$ and $R >0$ such that
%
\begin{equation}
\label{eqcruest2} T_t \mathbf{1}(x) \leq C_{33}
G^V_{B(x,1)}\mathbf{1}(x) \nu (x), \qquad|x| \geq R.
\end{equation}
\end{longlist}
Statements (1) and (2) are equivalent in the following sense. If (2) is
true for some $t=s>0$, then (1)
also follows for $t=s$. If (1) holds true for some $t=s>0$, then~(2)
follows for $t=2s$.
\end{lemma}

\begin{pf}
For the proof of the implication (2)${}\Rightarrow{}$(1), it suffices to
note that there is $R>0$ large
enough such that $G_{B(x,1)}^V\mathbf{1}(x) \leq G_{B(x,1)}^0 \mathbf
{1}(x) = \mathbf{E}^x
[\tau_{B(x,1)}] = \mathbf{E}^0 [\tau_{B(0,1)}]
< \infty$ for $|x| \geq R$.

For the converse implication, we suppose that (1) holds for some
$t/2>0$ and find $R_1 \geq R \vee2$ such
that $V(x) \geq0$ for $|x| \geq R_1-1$. Denote $D=B(x,1)$ and let $|x|
\geq R_1+2$. By Lemma~\ref{lmtechiu}
and Corollary~\ref{lmbhi}, we have
\begin{eqnarray*}
T_t\mathbf{1}(x) & =& \mathbf{E}^x \biggl[
\frac{t}{2}<\tau_D; e_V(t) \biggr] + \mathbf
{E}^x \biggl[\frac{t}{2} \geq\tau_D;
e_V(t) \biggr]
\\
& \leq& C \Bigl(G_D^V\mathbf{1}(x) \sup
_{y \in D}T_{t/2}\mathbf {1}(y) \Bigr)
 +
\mathbf{E}^x \bigl[e_V(\tau_D)T_{t/2}
\mathbf{1}(X_{\tau_D}) \bigr]
\\
& \leq& C G_D^V\mathbf{1}(x) \biggl(\sup
_{y \in B(x,1)}T_{t/2}\mathbf {1}(y)
\\
&&\hspace*{48.5pt}{}+ \int
_{B(x,1) \cap B(x,3/4)^c} \mathbf{E}^z \bigl[e_V(\tau
_D)T_{t/2}\mathbf{1}(X_{\tau
_D}) \bigr] \nu(z-x) \,dz
\\
&&\hspace*{48.5pt}{}+ \int_{B(x,1)^c \cap B(0,R_1)^c} T_{t/2}\mathbf{1}(z)\nu(z-x)\,dz
\\
&&\hspace*{107pt}{}+ \sup_{y \in B(0,R_1)}T_{t/2}\mathbf{1}(y) \int
_{B(0,R_1)} \nu(z-x) \,dz\biggr).
\end{eqnarray*}
Notice that by using (1) and exactly the same arguments as in (\ref
{eqaux1}) applied to $|\varphi(\cdot)|$
replaced by $T_{t/2}\mathbf{1}(\cdot)$, we get
%
\begin{equation}
\label{eqaux2} \mathbf{E}^z \bigl[e_V(
\tau_D)T_{t/2}\mathbf{1}(X_{\tau_D}) \bigr] \leq C
\nu(x), \qquad z \in D \cap B(x,3/4)^c
\end{equation}
with constant $C=C(X,V,t)$. Thus, by estimate (1), Assumption~\ref{assassnu}(1), (3) and~(\ref{eqaux2}),
we conclude similarly as in the proof of Theorem~\ref{thmdefic2} that
\[
T_t \mathbf{1}(x) \leq C G^V_{B(x,1)}
\mathbf{1}(x) \nu(x), \qquad|x| \geq R_1+2
\]
with $C=C(X,V,t)$, which completes the proof.
\end{pf}

\begin{theorem}
\label{thmcruiuc}
Let Assumptions~\ref{assassnu}--\ref{assasspinning} hold. If there
exist a constant $C_{13}$ and
$R > 0$ such that
%
\begin{equation}
\label{assassV1} \frac{V(x)}{|\log\nu(x)|} \geq C_{13}, \qquad|x| \geq R,
\end{equation}
then the bound (\ref{eqcruest1}) holds for all $t \geq t_0 =
2/C_{13}$. If, moreover,
%
\begin{equation}
\label{assassV2} \lim_{|x| \to\infty} \frac{V(x)}{|\log\nu(x)|} = \infty,
\end{equation}
then this bound holds for every $t>0$.
\end{theorem}

\begin{pf}
First assume that the inequality (\ref{assassV1}) is satisfied for $R>0$,
and denote $t_0 = 2/C_{13}$. Choose $n_0 \geq R$ large enough such that
\[
C_2 \nu\bigl((n,0,\ldots,0)\bigr) < 1\quad\mbox{and}\quad 2
\theta_0 \leq g_n:= \inf_{|y|\geq n}V(y)
\qquad\mbox{for } n \geq n_0
\]
and
\[
C_3 C_{28} C_{31}\leq g_{n_0},
\]
where $C_{31}$ is the constant and $\theta_0$ is the parameter from
Lemma~\ref{lmjumpest2}. Thus, the
assumptions of Lemma~\ref{lmjumpest2} are satisfied. Moreover, by
(\ref{assassV1}) and Assumption
\ref{assassnu}(2),
%
\begin{equation}
\label{eqgnvelnu} g_{n} \geq- C_{13} \log
\bigl(C_2 \nu\bigl((n,0,\ldots,0)\bigr)\bigr) \qquad \mbox {for } n
\geq n_0.
\end{equation}

We show that for every $t \geq t_0$ condition (\ref{eqcruest1}) holds.
Let $n-1 < |x| \leq n$,
$n \geq n_0 + 4$ and $t \geq t_0$. We have
%
\begin{eqnarray}\label{eqttest1}
T_t \mathbf{1}(x) &\leq&\mathbf{E}^x \bigl[\tau_n > t; e^{-\int_0^t
V(X_s)\,ds} \bigr]
\nonumber\\[-8pt]\\[-8pt]
&&{} + \sum _{k=n_0}^{n-2} \sum_{l=1}^{\infty}
\mathbf{E}^x \bigl[S(n,k,l,t), \tau_k > t;
e^{-\int_0^t V(X_s)\,ds} \bigr].\nonumber
\end{eqnarray}
By (\ref{eqgnvelnu}) and Assumption~\ref{assassnu}(1), the first
term at the right-hand side can
be easily estimated by
\[
\mathbf{P}^x (\tau_n > t) e^{- t g_{n-2}} \leq
e^{C_{13} t \log(C_2
\nu
((n-2,0,\ldots,0)))} \leq C^2_1 C_2 \nu(x).
\]
Similar arguments and Lemma~\ref{lmjumpest2} also yield
\begin{eqnarray*}
&& \mathbf{E}^x \bigl[S(n,k,l,t), \tau_k > t;
e^{-\int_0^t V(X_s)\,ds} \bigr]
\\
&&\qquad  \leq \mathbf{E}^x \bigl[S(n,k,l,t),
\tau_k > t; e^{-(1/2) \int_0^{\sigma_k} V(X_s)\,ds}e^{-(1/2)\int_0^t
V(X_s)\,ds} \bigr]
\\
&&\qquad \leq e^{(1/2)C_{13} t \log(C_2 \nu((k-2,0,\ldots,0)))} \mathbf{E}^x \bigl[S(n,k,l,t);
e^{-(1/2) \int_0^{\sigma_k}
V(X_s)\,ds} \bigr]
\\
&&\qquad  \leq \frac{C^2_1 C_2 C_{31}}{2^lg_{n_0}} \int_{R_k}\nu(y) \nu(y-x)\,dy
\end{eqnarray*}
for $k \geq n_0+2$. For $k \in \{n_0, n_0+1 \}$ we have
\begin{eqnarray*}
&& \mathbf{E}^x \bigl[S(n,k,l,t), \tau_k > t;
e^{-\int_0^t
V(X_s)\,ds} \bigr]
\\
&&\qquad  \leq \mathbf{E}^x \bigl[S(n,k,l,t);
e^{- \int_0^{\sigma_k}
V_{+}(X_s)\,ds}e^{\int
_0^{t+\sigma_k} V(X_s)\,ds} \bigr]
\\
&&\qquad  = \mathbf{E}^x \bigl[S(n,k,l,t); e^{- \int_0^{\sigma_k}
V(X_s)\,ds}e^{\int_{\sigma
_k}^{t+\sigma_k} V_{-}(X_s)\,ds}
\bigr]
\\
&&\qquad = \mathbf{E}^x \bigl[S(n,k,l,t); e^{- \int_0^{\sigma_k} V(X_s)\,ds}
\mathbf{E}^{X_{\sigma_k}} \bigl[e^{\int_0^t
V_{-}(X_s)\,ds} \bigr] \bigr]
\\
&&\qquad \leq C_{21}e^{C_{22}t} \mathbf{E}^x
\bigl[S(n,k,l,t); e^{-(1/2)
\int_0^{\sigma_k} 2V(X_s)\,ds} \bigr]
\\
&&\qquad  \leq \frac{C_{21} C_{31} e^{C_{22}t}}{2^l g_{n_0}}\int_{R_k} \nu(y-x)\,dy
\end{eqnarray*}
by the strong Markov property, (\ref{eqkhas}) and Lemma~\ref{lmjumpest2}. Thus, by Assumption~\ref{assassnu}(1) and (3), the second term at the right-hand side of (\ref
{eqttest1}) is bounded above by
\begin{eqnarray*}
&& \sum_{k=n_0}^{n-2} \sum
_{l=1}^{\infty}  \mathbf{E}^x
\bigl[S(n,k,l,t), \tau_k > t; e^{-\int_0^t
V(X_s)\,ds} \bigr]
\\
&&\qquad \leq C \sum_{l=1}^{\infty} 2^{-l}
\Biggl(\sum_{k=n_0}^{n_0+1} \int
_{R_k} \nu (y-x)\,dy + \sum_{k=n_0+2}^{n-2}
\int_{R_k} \nu(y) \nu(y-x)\,dy \Biggr)
\\
&&\qquad \leq C \nu(x),
\end{eqnarray*}
where $C=C(X,V,t)$, and the first part of the theorem is proved. The
second assertion follows from the first
part by observing that (\ref{assassV2}) implies (\ref{assassV1}) with
arbitrarily large constant $C_{13}$.
\end{pf}

\begin{pf*}{Proof of Theorem~\ref{thmsuffiuc}}
We first prove (1). By Theorems~\ref{thmcruiuc}~and~\ref{thmgsest}, Lemma~\ref{lmequiv},
there is $R>0$ such that for all $|x|>R$ condition (\ref{eqdefiuc})
holds with $t_0 = 4/C_{13}$. The
same is true for $|x|\leq R$ by boundedness of $T_t\mathbf{1}$, and by
continuity and strict positivity of
$\varphi_0$.

To prove (2) fix $\varepsilon\in(0,1]$ and first note that for every
$t>0$, we have that 
$\mathbf{P}^0(t<\tau_{B(0,\varepsilon)})>0$. This positivity
property follows
from \cite{bibPru} for small
$t>0$ and extends to all $t>0$. By definition of $t_0$-GSD and Theorem
\ref{thmgsest}, there is
$R > 0$ such that
\[
e^{- t_0 \sup_{|y-x|<\varepsilon} V(y)} \mathbf{P}^x(t_0< \tau
_{B(x,\varepsilon
)}) \leq T_{t_0} \mathbf{1}(x) \leq C
\varphi_0(x) \leq C \nu(x)
\]
with $C=C(X,V,t_0)$, for all $|x| > R$. Thus, by the fact that $|\log
\nu(x)| \to\infty$ as $|x| \to\infty$,
we can choose $R_{\varepsilon} \geq R$ large enough such that
\[
\frac{\sup_{|y-x|<\varepsilon}V(y)}{|\log\nu(x)|} \geq \frac{1}{t_0} \biggl(1 - \frac{\log(
C/({\mathbf{P}^0(t_0< \tau_{B(0,\varepsilon)})})}{|\log\nu(x)|} \biggr)
\geq\frac
{1}{2t_0}
\]
for $|x| > R_{\varepsilon}$, which gives the required bound.
\end{pf*}

\begin{pf*}{Proof of Theorem~\ref{thmneciuc}}
To prove (1), observe that for any $t>0$ we can find $R>0$ such that
$V(x) \geq(4/t) |\log\nu(x)|$
for $|x|>R$, and we can proceed in the same way as in the proof of (1)
of Theorem~\ref{thmsuffiuc}.

When the semigroup $\{T_t\dvtx  t \geq0\}$ is IUC, then by the same
arguments as in
the proof of Theorem
\ref{thmsuffiuc}(2) for every $t>0$ there is $C=C(X,V,t)$ such that
we have
\[
\frac{\sup_{|y-x|<\varepsilon}V(y)}{|\log\nu(x)|} \geq \frac{1}{t} \biggl(1 - \frac{\log(C/({\mathbf{P}^0(t< \tau
_{B(0,\varepsilon
)})})}{|\log\nu(x)|}
\biggr), \qquad|x|>R, t>0
\]
for some $R >0$. Thus, $\liminf_{|x| \to\infty} \frac{\sup_{|y-x|<\varepsilon}V(y)}{|\log\nu(x)|} =
\frac{1}{t}$, $t >0$, which completes the proof.
\end{pf*}

\begin{pf*}{Proof of Proposition~\ref{propultracontractivity}}
Denote $D = B(x_0,\varepsilon)$ and let $M =\break \sup_{y \in D} V(y)$. By
assumption (1) and Lemma~\ref{lmcompden},
$\lim_{y \to x} p_D(t,x,y) = \infty$ for every $x \in D$. Using this,
we can derive that the transition operator
of the process $(X_t)_{t \geq0}$ killed in $D$, that is, $P^D_t f(x) =
\int_D
p_D(t,x,y) f(y) \,dy$, $f \in L^1(D)$, is not
bounded from $L^1(D)$ to $L^{\infty}(D)$. Since for $f \geq0$, we have
\begin{eqnarray}
T_t f(x) \geq\mathbf{E}^x \bigl[e^{-\int_0^t V(X_s)\,ds}
f(X_t); t < \tau _D \bigr] \geq e^{-Mt} \int
_{D} p_D(t,x,y) f(y) \,dy,\nonumber
\\[-3pt]
\eqntext{x \in D,}
\end{eqnarray}
this clearly means that $T_t$ is not a bounded operator from
$L^1(\mathbf{R}
^d)$ to $L^{\infty}(\mathbf{R}^d)$ as well. Thus,
also $T_{t/2}$ cannot be a bounded operator from $L^2(\mathbf{R}^d)$ to
$L^{\infty}(\mathbf{R}^d)$.
\end{pf*}

\begin{pf*}{Proof of Theorem~\ref{thmcharacter}}
First, assume that the potential $V$ is such that the semigroup $\{T_t\dvtx
t \geq0\}$ is $t_0$-GSD. From Corollary
\ref{corborder}(2), we directly derive that there is $R>0$ such that
$2C_{14}t_0V(x)\nu(x) \geq
- \nu(x) \log\nu(x)$, for all $x \in B(0,R)^c$. By integrating in this
inequality with respect to
Lebesgue measure over an arbitrary Borel set $A \subset B(0,R)^c$, we
obtain $F_A^{2C_{14}t_0V}(\nu)
\geq0$.

Consider the converse implication. Since for some $t_0>0$, $R>0$ and
any Borel set $A \subset B(0,R)^c$,
we have $\int_A \nu(x)  (|\log\nu(x)| - t_0V(x) )\,dx \leq0$,
the bound $|\log\nu(x)| \leq
t_0V(x)$ holds for almost every $x \in B(0,R)^c$. By Assumptions~\ref{assassnu}(1) and~\ref{assasscomp},
it is immediate to deduce that there is $R_1>1$ such that $|\log\nu
(x)| \leq2 C_{14} t_0 V(x)$ for all
$|x|>R_1$. Again, by Corollary~\ref{corborder}(2), this implies $8
C_{14} t_0$-GSD of $\{T_t\dvtx  t \geq0\}$.
\end{pf*}

\section{Discussion of examples}\label{sec4}

\subsection{Verification of assumptions for the class of L\'evy processes considered}\label{sec4.1}
\label{subsecass}
In the first example below, we show various choices of structure of the
L\'evy measure~$\nu$ that satisfy
conditions (1)--(3) in Assumption~\ref{assassnu}.

\begin{example}\label{exex1}
(1)~Choosing $\nu(x) \asymp|x|^{-d-\alpha}(1+|x|)^{\alpha-\beta}$, $x
\in \mathbf{R}^d$, for $\alpha\in[0,2)$ and
$\beta> 0$, it can be directly seen that the conditions are verified.

(2)~Also, if $\nu(x) \asymp\kappa(|x|) |x|^{-d-\alpha}$, $\alpha\in
(0,2)$, where $\kappa\dvtx
[0,\infty) \to(0, 1]$ is a nonincreasing function such that $\kappa
(0)=1$ and $\kappa(a)\kappa(b) \leq
C \kappa(a + b)$, $a, b, C>0$, then all conditions on $\nu$ are
verified directly. Examples include
$\kappa(s)= 1/\log(e+s)$ and $\kappa(s)= 1/(\log(e + \log(1+s)))$.

(3)~A case of special interest is $\nu(x) \asymp e^{-a|x|^{\beta}}
|x|^{-d-\delta} (1+|x|)^{d+\delta-\gamma}$ with
\mbox{$a > 0$}, $\beta> 0$, $\delta\in[0,2)$ and $\gamma> 0$. In this case
condition~(2) always holds, condition (1) is satisfied when $\beta\in(0,1]$
without further restriction, and condition (3)
is satisfied when moreover $\gamma> (d+1)/2$.
\end{example}

We also give counterexamples to condition (3) in Assumption~\ref{assassnu}.

\begin{example} 
\label{exneg1}
For $\beta>1$ in case (3) of Example~\ref{exex1} the condition (3) of
Assumption~\ref{assassnu} is not
satisfied. Similarly, at least in one dimension, it fails when $\beta
=1$ and $\gamma= (d+1)/2$.
\end{example}

In the group of Examples~\ref{exex2}--\ref{exex5} next we discuss
specific classes of L\'evy processes
satisfying all of Assumptions~\ref{assassnu}--\ref{assassbhi}.

\begin{example}
\label{exex2}
\emph{Subordinate Brownian motions} with characteristic
exponents $\psi$ such that $e^{-t_{b} \psi(\cdot)} \in L^1(\mathbf
{R}^d)$ for
some $t_{b}>0$, whenever their L\'evy
measures satisfy Assumption~\ref{assassnu}. Since in this case $\nu
(x)$ is radially decreasing, condition
(2) of Assumption~\ref{assassnu} is automatically satisfied, however,
not necessarily the remaining
conditions (1) and (3). Condition (1) is always satisfied as long as
$\nu(x) \leq C \nu(y)$ for all $|x|
\geq1$, $|y|=|x|+1$, while, as seen in Example~\ref{exex1}, condition
(3) strongly depends on the
specific form of the L\'evy measure (in fact, the L\'evy measure of the
subordinator).
The transition densities $p(t,y-x)$ are given by the subordination
formula, that is, by the integral over time of
the Brownian transition kernel with respect to the distribution of the
given subordinator. Since also
$e^{-t_{b} \psi(\cdot)}$ is integrable, Assumption~\ref{assassdensity}
is satisfied [in particular, $(X_t)_{t \geq0}$
is not a compound Poisson process]. Lastly, Assumption~\ref{assassbhi}
follows from a similar bound for the
potential or \mbox{$\lambda$-}potential kernel, which is again a consequence
of the subordination formula and an easy
estimate. Below we give specific examples of subordinate Brownian
motion of special interest satisfying
all of our assumptions. For properties of subordinate Brownian motion
and further examples, see
\cite{bibBBKRSV,bibKSV,bibSSV,bibBer2}.
\begin{longlist}[(2)]
\item[(1)]
\emph{Rotationally symmetric $\alpha$-stable process}.
Let $\psi(\xi)=|\xi|^{\alpha}$, $\alpha\in(0,2)$. In this case,
$\nu
(x) = C(\alpha) |x|^{-d-\alpha}$.

\item[(2)]
\emph{Mixture of independent rotationally symmetric stable processes
with indices $\alpha$ and $\beta$}.
This is obtained for $\psi(\xi)=a |\xi|^{\alpha}+ b |\xi|^{\beta}$,
$0<\beta<\alpha<2$, $a, b >0$. We
have $\nu(x) = aC(\alpha)|x|^{-d-\alpha}+bC(\beta)|x|^{-d-\beta}$.

\item[(3)]
\emph{Jump-diffusion process} \cite{bibCKSo3,bibCKu2}.
Let $\psi(\xi)= a |\xi|^{\alpha}+ b |\xi|^2$, $0<\alpha<2$, $a, b >0$,
that is, the process is a mixture of
a rotationally symmetric $\alpha$-stable process and an independent
Brownian motion. In this case $\nu(x)
\asymp a C(\alpha) |x|^{-d-\alpha}$.

\item[(4)]
\emph{Rotationally symmetric geometric $\alpha$-stable process} \cite{bibSiSV,bibGrzR}.
Let $\psi(\xi)= \log(1+|\xi|^{\alpha})$, $0<\alpha<2$. In this case,
$\nu(x) \asymp|x|^{-d}(1+|x|)^{-\alpha}$.
Notice that $\psi$ is now a slowly varying function at infinity. In
contrast to the previous examples,
in this case there is $t(\alpha)>0$ for which the transition
probability densities are unbounded for
$0<t<t(\alpha)$ (\cite{bibBBKRSV}, page~117), though they are bounded for
large $t$.

\item[(5)]
\emph{Relativistic rotationally symmetric $\alpha$-stable process}
\cite{bibR,bibCKSo2}.
Let\break  $\psi(\xi)=(|\xi|^2 + m^{2/\alpha})^{\alpha/2}-m$, $\alpha\in
(0,2)$, $m>0$. It is known that $\nu(x)
\asymp\break e^{-m^{1/\alpha}|x|}|x|^{-d-\alpha}(1+|x|^{(d+\alpha-1)/2})$
\cite{bibKS} [we take $a=m^{1/\alpha}$,
$\beta= 1$, $\delta=\alpha$, $\gamma= (d+\alpha+1)/2$ in (3) of
Example~\ref{exex1}].
\end{longlist}
\end{example}

\begin{example}
\label{exex3}
\emph{Symmetric L\'evy processes with nondegenerate Brownian part}~\cite{bibKSV2}.
Let $(X_t)_{t \geq0}$ be a L\'evy process with characteristic exponent
$\psi(\xi
)=c|\xi|^2+
\int_{\mathbf{R}^d}(1-\cos(z \cdot\xi))\nu(dz)$, $c>0$, that is,
a sum of
Brownian motion with rescaled time
$(B_{2ct})_{t \geq0}$ and an independent symmetric L\'evy process
$(Y_t)_{t \geq0}$ with L\'evy measure $\nu$
satisfying Assumption~\ref{assassnu}. In this case, the transition
densities are given by the convolution
of the Gaussian kernel and the distribution of the process $(Y_t)_{t
\geq0}$
[note that we do not need to assume
that $(Y_t)_{t \geq0}$ has transition densities] or by the Fourier inversion
formula. They are clearly bounded
for all $t>0$, thus Assumption~\ref{assassdensity} also is satisfied.
In one dimension, Assumption
\ref{assassbhi} easily follows from a similar bound for the
corresponding $\lambda$-potential kernel.
In higher dimensions, the required upper bound for the $\lambda
$-potential kernel can be proved by
showing that, for instance, for every $\varepsilon>0$ there is
$t_{\varepsilon}>0$ such that
%
\begin{equation}
\label{eqregcond} \mathbf{P}^0(Y_t \in A) \leq C |A| \qquad
\mbox{whenever } A \subset\mathcal{B}\bigl(\mathbf{R} ^d\bigr),
\operatorname{dist}(0,A)>\varepsilon, t \in(0,t_{\varepsilon})\hspace*{-30pt}
\end{equation}
(here $|A|$ denotes Lebesgue measure of $A$) with a constant
$C=C(Y,\varepsilon)$ independent of $t$
and the specific $A$. Specific cases are jump-diffusions as above, and
many similar processes in which
the rotationally symmetric stable process is replaced by other
symmetric L\'evy processes $(Y_t)_{t \geq0}$
satisfying Assumption~\ref{assassnu} and condition~(\ref{eqregcond}).
\end{example}

\begin{example}
\label{exex4}
\emph{Symmetric stable-like L\'evy processes} \cite{bibCKu}.
Let $\alpha\in(0,2)$ and $(X_t)_{t \geq0}$ be a purely jump (i.e.,
with no
diffusion part) symmetric L\'evy
process with intensity $\nu(x) \asymp C |x|^{-d-\alpha}$, $x \in
\mathbf{R}^d$.
It is known \cite{bibCKu} that
$(X_t)_{t \geq0}$ has bounded continuous transition probability
densities $p(t,y-x) \asymp t^{-d/\alpha}
\wedge t |y-x|^{-d-\alpha}$, $t>0$, $x, y \in\mathbf{R}^d$. In this case
Assumptions~\ref{assassnu} and
\ref{assassdensity} are clearly satisfied, while Assumption~\ref{assassbhi} is an easy consequence
of a similar bound for the potential kernel ($\alpha<d$) or the
$\lambda
$-potential kernel ($1=d \leq
\alpha< 2$) of $(X_t)_{t \geq0}$. This class includes a subclass of strictly
stable L\'evy processes with
intensities of the form $|x|^{-d-\alpha} f(x/|x|)$ with functions $f(x)
= f(-x)$ that are bounded from
above and below by positive constants \cite{bibBSz}.
\end{example}

\begin{example}
\label{exex5}
\emph{Symmetric L\'evy processes with subexponentially localized
L\'evy measures}.
Let $(X_t)_{t \geq0}$ be a symmetric L\'evy process with intensity
$\nu(x)
\asymp e^{-a|x|^{\beta}} |x|^{-d-\delta}
(1+|x|)^{d+\delta-\gamma}$, where $a > 0$, $\beta\in(0,1]$, $\delta
\in[0,2)$ and $\gamma> (d+1)/2$.
Such processes were considered in more general settings in \cite{bibCKK} (see also~\cite{bibCKu3}
and references therein). As discussed in Example~\ref{exex1}(3),
Assumption~\ref{assassnu} is verified.
Moreover, as proved in a greater generality in \cite{bibCKK}, Theorem 1.2(1), $(X_t)_{t \geq0}$ is a strong
Feller process with bounded continuous transition densities satisfying
appropriate sharp two-sided
bounds with respect to large and small times separately (see \cite{bibCKK}, (1.13) and (1.14)). Thus,
also Assumption~\ref{assassdensity} holds. As before, the required
bound on the Green function in
Assumption~\ref{assassbhi} may be obtained by showing the same
estimate for the $\lambda$-potential
kernel of the process $(X_t)_{t \geq0}$, which can be easily done by
using the
sharp transition density
estimates referred to above. This class includes a large family of
(exponentially) tempered symmetric
stable processes \cite{bibRos} [$a>0$, $\beta= 1$, $\gamma= \delta+
d$, $\delta\in(0,2)$]
and the rotationally symmetric relativistic stable processes above.
\end{example}

We also give two examples of processes, which do not satisfy some of
our assumptions.

\begin{example}\label{exneg2}
(1)~\emph{Rotationally\vspace*{1pt} symmetric geometric $2$-stable} (\emph{gamma variance}) \emph{process}.
Let $\psi(\xi)= \log(1+|\xi|^2)$. In\vspace*{1pt} this case the L\'evy intensity is $\nu(x) \asymp
|x|^{-d}e^{-|x|}(1+|x|)^{(d-1)/2}$. As in
Example~\ref{exneg1}, at least in dimension one, the L\'evy measure
does not satisfy condition (3) of
Assumption~\ref{assassnu}.

(2)~\emph{Iterated rotationally symmetric geometric $\alpha$-stable process}.
Let $\psi(\xi)=\log(1+\log^{\alpha}(1+|\xi|^{\alpha}))$,
$0<\alpha<2$.
It can be checked directly that
the transition densities are unbounded for any $t>0$ (see \cite{bibBBKRSV}, page~117) and the second part
of Assumption~\ref{assassdensity} fails.
\end{example}

\subsection{Decay of ground state and intrinsic ultracontractivity-type properties}\label{sec4.2}
\label{subsecexres}

It is useful to see how Theorem~\ref{thmgsest} translates to
particular cases of processes. In the
following, we give explicit examples of ground state decays and compare
our results with others.

\begin{example}\label{exex6}
(1)~\emph{Rotationally symmetric non-Gaussian stable and related processes}
discussed in Examples
\ref{exex2}(1)--(4) and~\ref{exex4} above. In particular, this
includes mixtures of two stable
processes with different stability indices, jump-diffusion,
rotationally symmetric geometric
$\alpha$-stable [with $\alpha\in(0,2)$], and \mbox{symmetric} stable-like
L\'evy processes. In this
case, we have $\nu(x) \asymp|x|^{-d-\alpha}$, $|x| > 1/2$, $\alpha
\in
(0,2)$, and hence
\[
\varphi_0(x) \asymp G^V_{B(x,1)}\mathbf{1}(x)
|x|^{-d-\alpha}, \qquad|x| > R.
\]
Furthermore, when also the condition in Corollary~\ref{comparable} is
satisfied, then
\[
\varphi_0(x) \asymp\frac{1}{(1+|x|)^{d+\alpha}(1+V_{+}(x))}, \qquad x \in
\mathbf{R}^d.
\]
This clearly recovers the results for non-Gaussian symmetric stable
processes in~\cite{bibKaKu,bibKL}.

(2)~\emph{Symmetric L\'evy process with nondegenerate Brownian part}.
For this, see Example~\ref{exex3} above. In this case, Theorems~\ref{thmbsest} and~\ref{thmgsest} (also
Corollary~\ref{comparable}) allow to identify the leading order of
decay of the ground state at infinity
(as well as provide upper bounds for higher order eigenfunctions) as
the contribution of the L\'evy intensity
$\nu$ and a correction from the potential $V$. However, since our
constants are not optimal, it can be expected
that the correct asymptotics should contain a further term of smaller
order (similar to Carmona's bound in
\cite{bibCa}) coming from the Brownian component of the process.
However, showing this requires a more subtle
argument and cannot be seen from our present results.

(3)~\emph{Symmetric jump L\'evy processes with exponentially localized L\'
evy measure}.
See Example~\ref{exex5} above. It is a well-known result in \cite{bibCMS},\vspace*{1pt} Proposition IV.4, that if
$e^{-t\psi(\cdot)} \in L^1(\mathbf{R}^d)$, $t>0$, and there is $b>0$
such that
$\int_{|x|>1} e^{b|x|} \nu(dx) < \infty$,
then $|\varphi_n(x)|\leq C e^{-C'|x|}$, $x \in\mathbf{R}^d$, with
$C=C(X,V,n)$, $C'=C'(X,V,n)$, that is, if the L\'evy
measure is exponentially localized, then the fall-off of the
corresponding eigenfunctions is also exponential.
Note that Theorems~\ref{thmbsest}--\ref{thmgsest} (also Corollary~\ref{comparable}) essentially improve this
result under assumptions which are not more significantly restrictive
than those of \cite{bibCMS}.

(4)~\emph{Rotationally symmetric relativistic stable process}.
Compare Example~\ref{exex2}(5). This is a special case of the class
discussed in (3) above. It was proven in
\cite{bibKS}, Theorem 1.6, that if $V$ is a nonnegative, locally
bounded potential comparable to a rotationally
symmetric function, radially nondecreasing and comparable on unit balls
with $\lim_{|x| \to\infty}V(x)/|x|=\infty$
(i.e., the corresponding Feynman--Kac semigroup is IUC), then
\[
\varphi_0(x) \asymp\frac{e^{-m^{1/\alpha}|x|}}{(1+|x|)^{(d+\alpha
+1)/2}(1+V(x))}, \qquad x \in
\mathbf{R}^d.
\]
Theorem~\ref{thmgsest} above generalizes this result to the
substantially larger space of \mbox{$X$-}Kato class
potentials with no restrictions on the order of growth of the potential
at infinity and with no use being
made of intrinsic ultracontractivity properties. We obtain the result
of \cite{bibKS} as the second part
of Corollary~\ref{comparable}. Note also that Theorem~\ref{thmbsest}
is completely new in this context.

(5)~\emph{Diffusions}. It is useful to compare our results to the classic
facts known for the eigenfunctions of
Feynman--Kac semigroups involving Brownian motion (i.e., Schr\"odinger
semigroups generated by $-\Delta+V$).
In the general case, $t_0$-GSD and $t_0$-IUC (see the definitons in
Section~\ref{subsecIUC}) imply directly that for all $x \in\mathbf
{R}^d$ and~$n \geq1$
%
\begin{eqnarray}\label{eqdomi}
\bigl|\varphi_n(x)\bigr| &\leq& C(t_0) \llVert
\varphi_n\rrVert _{\infty} e^{(\lambda
_n-\lambda_0)t_0} \varphi_0(x) \quad \mbox{and}
\nonumber\\[-8pt]\\[-8pt]
\bigl|\varphi_n(x)\bigr| &\leq& C(t_0) e^{(\lambda_n-\lambda
_0)t_0} \varphi_0(x),\nonumber
\end{eqnarray}
respectively. In Corollary~\ref{cordom}, we get without any use of
AGSD/AIUC-type properties that for a large class of jump L\'evy processes
%
\begin{equation}
\label{eqdomi2} \bigl|\varphi_n(x)\bigr| \leq C(X,V,n) \varphi_0(x),
\qquad x \in\mathbf {R}^d, n \geq1
\end{equation}
with a constant $C(X,V,n)$. In comparison with (\ref{eqdomi}), the
dependence on $n$ of the constant $C(X,V,n)$ is rather implicit, but
(\ref{eqdomi2}) still says that for each fixed $n$ the~decay of
$\varphi_n$ at infinity is dominated by that of $\varphi_0$. This
markedly differs from the diffusive case, where the estimates as in
(\ref{eqdomi2}) cannot be taken for granted in lack of AGSD/AIUC-type
properties. This can be seen, for instance, in the
example of the harmonic oscillator, for which $V(x)=|x|^2$ and the
eigenfunctions are given by the Hermite
functions. It is known that in this case the semigroup is not IUC (not
even AIUC). A direct analysis shows that (\ref{eqdomi2})
does not occur either.
\end{example}

We now illustrate our results on intrinsic ultracontractivity-type
properties by specific examples.

\begin{example}
Theorems~\ref{thmsuffiuc}--\ref{thmneciuc} and Propositions~\ref{propAIUCproba}--\ref{propIUCproba}
apply directly to the following three classes of examples with
different growth rate of borderline
potentials. For any of these specific examples, Assumption~\ref{assasspnu} can be verified by using
the time--space estimates of the related transition densities, as in
Examples~\ref{exex2}--\ref{exex5}.
\begin{longlist}[(2)]
\item[(1)]
\emph{Borderline potentials of logarithmic order}. The borderline behavior
\[
-\log\nu(x) \asymp-\log\mathbf{P}^x\bigl(X_t \in B(0,1)
\bigr) \asymp-\log p(t,x) \asymp\log|x|
\]
occurs in the following cases:
\begin{enumerate}[(a)]
\item[(a)]
\emph{Jump stable-type processes with bounded transition densities}
[see Examples~\ref{exex2}(1)--(3) and~\ref{exex4}]: this includes non-Gaussian rotationally
symmetric stable processes (our results
recover and substantially improve the methods of \cite{bibKaKu,bibKL}), mixtures of rotationally symmetric
stable processes, jump-diffusions [in this case the Brownian component
has no effect on (A)GSD and (A)IUC],
and symmetric stable-like L\'evy processes. In this case, Theorem~\ref{thmiucvelgsd} implies equivalence of
GSD and IUC.

\item[(b)]
\emph{Rotationally symmetric geometric $\alpha$-stable processes,
$\alpha\in(0,2)$} [see Example~\ref{exex2}(4)]: by Proposition~\ref{propultracontractivity} the semigroup $\{
T_t\dvtx  t \geq0\}$ is not IUC (i.e., it is not $t$-IUC
for small $t>0$, not even ultracontractive), while it is GSD and
$t$-IUC for $t>2 t_b$ provided the potential
$V$ is pinning enough.
\end{enumerate}

\item[(2)]
\emph{Borderline potentials of linear order}. The borderline behavior
\[
-\log\nu(x) \asymp-\log\mathbf{P}^x\bigl(X_t \in B(0,1)
\bigr) \asymp-\log p(t,x) \asymp|x|
\]
occurs for rotationally symmetric L\'evy processes satisfying our
assumptions provided that their L\'evy
intensities are exponentially decaying at infinity (the case $\beta=1$
in Example~\ref{exex5}).
Important examples to this class are rotationally symmetric
relativistic stable processes and tempered rotationally symmetric
stable processes. For relativistic
stable processes, it was proven in \cite{bibKS} that when $V$ is a
nonnegative, locally bounded potential
comparable to a function which is rotationally symmetric, radially
nondecreasing and comparable on unit balls,
then the corresponding Feynman--Kac semigroup is IUC if and only if
$\lim_{|x| \to\infty}V(x)/|x|=\infty$.
The combination of Theorems~\ref{thmiucvelgsd} and~\ref{thmneciuc}
generalizes this result to the
substantially larger class of $X$-Kato class potentials. Also, we
obtain the result of \cite{bibKS} in
Corollary~\ref{corborder}(1) under Assumption~\ref{assasscomp}.

\item[(3)]
\emph{Borderline potentials of sublinear but faster than logarithmic
order}. The borderline behavior
\[
-\log\nu(x) \asymp-\log\mathbf{P}^x\bigl(X_t \in B(0,1)
\bigr) \asymp-\log p(t,x) \asymp|x|^{\beta}, \qquad0 < \beta< 1,
\]
appears in the case of processes with L\'evy measures decaying
subexponentially at infinity
(the case $\beta<1$ in Example~\ref{exex5}).
\end{longlist}

Note that, roughly speaking, this is the complete range of possible
borderline growths for the processes
we consider. An asymptotic growth of the order $|x|^{\beta}$ with
$\beta
>1$ is ruled out by Assumption
\ref{assassnu}(3), see the discussion in Example~\ref{exneg1}. Also,
the borderline potential cannot
be slower than logarithmic due to the integrability of the L\'evy
intensity $\nu$ outside a neighborhood
of the origin. We also note that linear growth is the quickest possible
as well for the class of subordinate
Brownian motions obtained under subordinators whose L\'evy exponents
are complete Bernstein functions, see
\cite{bibKSV}, Lemma 2.1.
\end{example}

A second type of example is about Feynman--Kac (in fact, Schr\"odinger)
semigroups involving standard Brownian
motion under a potential. Although the strictly diffusive case when
$\nu\equiv0$ is not covered by our paper,
it is interesting to compare the results to better understand what
mechanism lies behind IUC in the general case.

\begin{example}\label{whyeasier}
\emph{Diffusions}. In the classic papers on IUC of Schr\"odinger
semigroups generated by $-\Delta+V$, it was
considered whether the property holds for some special ways of choosing
the potential. In the one-dimensional
case \cite{bibDS}, Theorem 6.1, shows that when $V(x) = |x|^a$, $a>0$,
or $V(x)=|x|^2\log(|x|+2)^b$, $b>0$,
then the related semigroup is IUC if and only if $a>2$ and $b>2$ (i.e.,
fails for $a, b \leq2$). When $d
\geq1$, it is shown in \cite{bibDS}, Theorem 6.3, that IUC occurs
whenever $C_1 + C_2 |x|^b \leq V(x) \leq
C_3 + C_4 |x|^a$, with $a/2 +1 <b$. To the best of our knowledge, AIUC
was not considered before the paper
\cite{bibKL}. However, by a use of the Mehler formula it follows that
in the case of the harmonic oscillator
AIUC does not occur, see \cite{bibD}, Theorem 4.3.2. Recently, in \cite{bibAT} a general sufficient condition
for IUC was found for Schr\"odinger semigroups. For radial potentials
$V$, this condition is also necessary
and it is formulated as
%
\begin{equation}
\label{eqIUCclasscond} \int_{r_0}^{\infty} \frac{1}{\sqrt{V(r)}}
\,dr < \infty\qquad\mbox{for some } r_0 >0.
\end{equation}
For instance, for the potential
\begin{eqnarray}
V(x)=|x|^2 \bigl(\log|x|\bigr)^2 \bigl(\log\log|x|\bigr)^2\cdots \bigl(
\underbrace{\log \cdots\log |x|}_{(m-1)\mbox{-}\mathrm{times}}\bigr)^2 \bigl(\underbrace{\log\cdots
\log|x|}_{m\mbox{-}\mathrm{times}}\bigr)^{2+\delta},\nonumber
\\
\eqntext{m \in\mathbf{N}, \delta\geq0,}
\end{eqnarray}
this condition is satisfied if and only if $\delta>0$. This means that
IUC holds for an arbitrary choice of
$m \in\mathbf{N}$ whenever $\delta>0$, and suggests that in the
diffusion case
it is not possible to identify the
borderline potential directly as in the case of jump processes. Note
that all of the classic results discussed
above were obtained by purely analytic arguments. We believe that it is
possible to derive the analytic
condition (\ref{eqIUCclasscond}) by probabilistic methods based on
sufficiently efficient estimates of the
expression at the right-hand side of (\ref{eqprobchar}). For instance,
when $V \geq0$ satisfies Assumption
\ref{assasscomp} and the semigroup is IUC, then a rough estimate gives
$\lim_{|x| \to\infty} V(x)/|\log
\mathbf{P}^x(X_t \in D)|= \infty$, allowing correctly to identify
$|x|^2$ as
the leading order of borderline growth,
as in Proposition~\ref{propIUCproba}(1).
\end{example}

In the context of diffusions, we also mention that a condition similar
to part (3) of Assumption~\ref{assassnu}
has been used for Green functions of elliptic differential operators on
domains in \cite{bibMur07,bibMM09,bibTom}
and related papers, and it goes back to \cite{bibPin89,bibPin99},
where it was introduced as a small-perturbation
condition of an elliptic operator by another operator. In particular,
it is shown that intrinsic ultracontractivity
implies the small-perturbation condition.


\section*{Acknowledgments}
The authors thank IHES, Bures-sur-Yvette,
for a visiting fellowship providing an ideal environment to joint work.
We are pleased to thank \mbox{T.~Kulczycki} for discussions on intrinsic ultracontractivity
and eigenfunction estimates. Our
special appreciation goes to M. Kwa\'snicki for reading a preliminary
version of the manuscript, and for
many valuable comments and discussions on intrinsic ultracontractivity
and estimates of
local extrema of harmonic functions. We also thank the anonymous
referee for a careful reading
of the manuscript and observations which improved our paper.



\printaddresses

\end{document}